\documentclass[11pt]{amsart}
\include{epic.sty}
\usepackage{latexsym,amssymb}

\usepackage{amsmath}
\usepackage{amsthm}
\usepackage{mathdots}
\usepackage{color}
\usepackage{multirow}
\usepackage{bm}
\usepackage{hyperref}
\usepackage[margin=1.25in]{geometry}

\usepackage[latin1]{inputenc}

\parskip=+3pt

\newtheorem{proposition}{Proposition}
\newtheorem{theorem}{Theorem}

\newcommand{\ZZ}{\mathbb{Z}}

\newtheorem{lemma}{Lemma}

\newcommand{\End}{\text{End}}

\newcommand{\id}{\mathbf{1}}
\newcommand{\QQ}{\mathbb{Q}}

\newcommand{\Sch}{\mathcal{S}}
\newcommand{\db}{{}^{\mathfrak{b}}d}

\newcommand{\superpoly}{\QQ[\mathbf{x},\mathbf{dx}]}
\newcommand{\superpolydenoms}{\QQ[\mathbf{x},\mathbf{dx},\bm{\alpha}]}
\newcommand{\extpoly}{\QQ[\mathbf{x},\bm{\omega}]}
\newcommand{\p}{\mathbf{p}}
\newcommand{\x}{\mathbf{x}}
\newcommand{\f}{\mathbf{f}}
\newcommand{\df}{\mathbf{df}}
\newcommand{\PP}{\mathsf{P}}
\newcommand{\J}{{}^{\mathfrak{b}}\mathbf{\mathsf{J}}_p^f }

\newcommand{\nhb}{{}^{\mathfrak{b}}\mathsf{NH}^{ext}}
\newcommand{\schub}{{}^{\mathfrak{b}}\mathfrak{s}}

\begin{document}
\title{Extended nilHecke algebras and symmetric functions in type B}
\author{Michael Reeks}
\address{Department of Mathematics\\ University of Virginia \\ Charlottesville, VA}
\email{mar3nf@virginia.edu}

\begin{abstract} We formulate a type B extended nilHecke algebra, following the type A construction of Naisse and Vaz. We describe an action of this algebra on extended polynomials and describe some results on the structure on the extended symmetric polynomials. Finally, following Appel, Egilmez, Hogancamp, and Lauda, we prove a result analogous to a classical theorem of Solomon connecting the extended symmetric polynomial ring to a ring of usual symmetric polynomials and their differentials. \end{abstract}
\maketitle

\section{Introduction}
Affine Hecke algebras have many diverse applications in representation theory, and their nil versions are fundamental in the study of categorification and higher representation theory. Type A nilHecke algebras appear as the quiver Hecke algebras associated to a single vertex, which were shown by Lauda (\cite{Lau}) to categorify the negative half of the quantum group $\mathcal{U}_q(\mathfrak{sl}_2)$. The nilHecke algebras  form an essential building block of the Khovanov--Lauda--Rouquier categorification of the quantum group associated to an arbitrary Kac--Moody algebra. The nilHecke algebras in classical types (in particular, in type B) also play a fundamental role in Schubert calculus (cf. \cite{Ku,FK}). 

An extended form of the nilHecke algebra in type A, $\mathsf{NH}_n^{ext}$ was constructed by Naisse and Vaz in \cite{NV1}. The algebra $\mathsf{NH}_n^{ext}$ has an additional set of generators $\omega_i$, which anticommute and square to 0. This algebra was used to construct the first categorifications of Verma modules. It has a faithful representation on a ring of extended polynomials, which are the tensor product of a polynomial ring and an exterior algebra. In \cite{AEHL}, Appel, Egilmez, Hogancamp, and Lauda study the combinatorics of this algebra and its polynomial representation, and provide additional descriptions of the ring of extended symmetric polynomials. They further show that $\mathsf{NH}_n^{ext}$ is a matrix algebra over these invariants (this was also shown independently in \cite{NV2}). They also prove an extended analogue of a theorem of Solomon (\cite{Sol}), which relates the extended symmetric polynomials to the invariants of $\QQ[\mathbf{x}] \otimes \bigwedge[\mathbf{dx}].$

In this note, we formulate a type B version of the extended nilHecke algebra, $\nhb$, and of its extended polynomial representation. A new feature here is the action of the simple reflections $s_i$ and the Demazure operators $\partial_i$ on the odd polynomial generators $\omega_i$: we set  $$s_i(\omega_i) = \omega_i + (x_i^2 - x_{i+1}^2)\omega_{i+1}$$ and $$\partial_i(\omega_j) = -\delta_{ij}(x_i+x_{i+1})\omega_{i+1}$$ for each $i$. This shifted action is more natural from the viewpoint of type B symmetric polynomials, which are generated by usual symmetric polynomials in the variables $x_i^2$, and facilitates the connection to Solomon's theorem. We then establish several basis and dimension results about the type B extended symmetric polynomials following \cite{AEHL}, demonstrate that ${}^{\mathfrak{b}}\mathsf{NH}_n^{ext}$ is a matrix algebra over these invariants. Many of these results and their proofs are parallel to the type A case discussed in \cite{AEHL} and \cite{NV2}.

We nonetheless note that this construction is a nontrivial extension of the type A situation, and that the type D extended nilHecke algebra seems to be even more difficult.  While it is relatively easy to find an action of the type D Demazure operators on the algebra of extended polynomials, most such actions do not preserve the expected relationships between the degrees of the $\omega_i$ and those of the type D symmetric polynomials as would be required to construct the matrix for the extended Solomon's theorem. 

The paper is structured as follows.  In Section 2, we describe the extended polynomial representation of the Weyl group of type $B_n$, and use it to define an extended polynomial representation of $\nhb_n$. In Section 3, we describe the extended symmetric polynomial ring. We describe a basis of extended Schur polynomials and show that $\nhb_n$ is a matrix ring over it. Finally, in Section 4, we formulate an extended type B analogue of Solomon's therorem, linking the extended type B symmetric polynomials to the type B invariants of $\QQ[\mathbf{x}] \otimes \bigwedge[\mathbf{dx}].$ 

\subsection{Acknowledgements}

The author thanks Weiqiang Wang for many helpful discussions concerning the paper, and Matthew Hogancamp for his collaboration in constructing the differentials in Section 2.3.


\section{The type B extended nilHecke algebra}

We review the definition of the type B Weyl group, and define an action of this group on extended polynomials. Using this action, we define a type B extended nilHecke algebra, derive a presentation with generators and relations, and prove that it has a PBW-type basis. 

\subsection{An action of $W_{B_n}$ on extended polynomials}
The Weyl group of type $B_n$, $W_{B_n}$ is generated by $s_1, \ldots, s_n$ with relations such that $s_1, \ldots, s_{n-1}$ generate a subgroup isomorphic to the symmetric group $S_n$, and additional relations $$s_n s_{n-1} s_ns_{n-1} = s_{n-1} s_n s_{n-1} s_n,$$ $$s_n s_i = s_i s_n \qquad (1\leq i \leq n-2),$$ $$s_n^2=1.$$ Define the extended polynomial ring $$\mathsf{P}_n^{ext} = \QQ[x_1,\ldots,x_n] \otimes \bigwedge[\omega_1, \ldots, \omega_n].$$ This ring is graded with $\deg(x_i) = 1$ and $\deg(\omega_i) = -2i$. 

Define an action of $W_{B_n}$ on the ring $\mathsf{P}_n^{ext}$ of extended polynomials by setting  $$s_i(x_j) = x_{s_i(j)}, \quad s_n(x_i) = x_i \qquad (1\leq i \leq n-1),$$ $$s_n(x_n) = -x_n,$$ $$s_i(\omega_j) = \omega_j + \delta_{ij}(x_i^2 - x_{i+1}^2) \omega_{i+1} \qquad (1\leq i \leq n-1),$$ $$s_n(\omega_n) = \omega_n,$$ and letting the $s_i$ act as automorphisms. 
\begin{lemma} The above defines an action of $W_{B_n}$ on $\mathsf{P}_n^{ext}$. \end{lemma}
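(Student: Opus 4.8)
The plan is to verify that the defining relations of $W_{B_n}$ are satisfied by the given operators on $\mathsf{P}_n^{ext}$. Since each $s_i$ is declared to act as a ring automorphism, it suffices to check the relations on the generators $x_1, \ldots, x_n, \omega_1, \ldots, \omega_n$. I would organize the verification according to which relations are already known and which are genuinely new. The action on the polynomial part $\QQ[x_1,\ldots,x_n]$ is the standard type B reflection action, so all braid relations, commutations, and the quadratic relations $s_i^2 = 1$ are classical on the $x_j$. Likewise, the $\omega_j$ for $j$ not adjacent to the relevant indices are fixed, so the only work is on the $\omega_j$ that actually move.

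The first substantive step is the quadratic relation $s_i^2(\omega_j) = \omega_j$. The only nontrivial case is $j = i$ with $1 \le i \le n-1$: compute $s_i^2(\omega_i) = s_i(\omega_i + (x_i^2 - x_{i+1}^2)\omega_{i+1})$; applying $s_i$ as an automorphism swaps $x_i^2 \leftrightarrow x_{i+1}^2$ and fixes $\omega_{i+1}$, giving $\omega_i + (x_i^2 - x_{i+1}^2)\omega_{i+1} + (x_{i+1}^2 - x_i^2)\omega_{i+1} = \omega_i$. The second step is the far commutation $s_i s_j(\omega_k) = s_j s_i(\omega_k)$ for $|i - j| \ge 2$ (including the relations $s_n s_i = s_i s_n$ for $i \le n-2$): here the shift terms $(x_i^2 - x_{i+1}^2)\omega_{i+1}$ and $(x_j^2 - x_{j+1}^2)\omega_{j+1}$ involve disjoint sets of variables and disjoint $\omega$'s, so the operators visibly commute; the $s_n$ cases are immediate since $s_n$ fixes all $\omega_j$ and the $x$-variables involved are disjoint. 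The third, and main, step is the type A braid relation $s_i s_{i+1} s_i(\omega_j) = s_{i+1} s_i s_{i+1}(\omega_j)$ for $1 \le i \le n-2$: this must be checked on $\omega_i, \omega_{i+1}, \omega_{i+2}$, tracking how the coefficients $(x_i^2 - x_{i+1}^2)$ and $(x_{i+1}^2 - x_{i+2}^2)$ transform under the $s$'s acting on the squared variables. Since $s_i, s_{i+1}$ permute $x_i^2, x_{i+1}^2, x_{i+2}^2$ exactly as $s_i, s_{i+1}$ permute $x_i, x_{i+1}, x_{i+2}$, this reduces to the known type A computation (Naisse--Vaz) with $x_k$ replaced by $x_k^2$, so it goes through verbatim. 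Finally, the mixed braid relation $s_n s_{n-1} s_n s_{n-1} = s_{n-1} s_n s_{n-1} s_n$ must be checked on $\omega_{n-1}$ and $\omega_n$; here $s_n$ fixes both $\omega_{n-1}$ and $\omega_n$, and $s_n$ sends $x_n^2 \mapsto x_n^2$, so $s_n$ acts trivially on the entire expression $(x_{n-1}^2 - x_n^2)\omega_n$, and the relation collapses to $s_{n-1}(\omega_j) = s_{n-1}(\omega_j)$ after the $s_n$'s are inserted — one checks each side reduces to the same thing using $s_n^2 = 1$ on the $x$-part.

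The key design feature making everything work is the choice $\deg(\omega_i) = -2i$ together with coefficients $x_i^2 - x_{i+1}^2$ rather than $x_i - x_{i+1}$: the squared variables are permuted by the type A subgroup in the standard way while being fixed by $s_n$, which is exactly what makes the braid and commutation relations reduce to (or trivialize relative to) the type A situation. I expect the type A braid relation on $\omega_j$ to be the only step requiring genuine calculation, and even that is a transcription of the computation in \cite{NV1}; the distinctively type B relations involving $s_n$ are essentially trivial because $s_n$ fixes every $\omega_j$ and fixes every $x_j^2$. One subtlety worth stating explicitly is that we must confirm the operators are well-defined ring automorphisms in the first place — i.e., that extending multiplicatively is consistent with the exterior relations $\omega_j^2 = 0$ and $\omega_j \omega_k = -\omega_k \omega_j$ — but this is immediate since each $s_i$ sends the generating set $\{\omega_j\}$ into the degree $\le -2$ part of $\mathsf{P}_n^{ext}$ in a way that is $\QQ[\mathbf{x}]$-linear modulo the automorphism on $\mathbf{x}$, and any such assignment extends uniquely to an algebra map once one checks it kills the defining relations of the exterior algebra, which follows from the computation that each $s_i(\omega_j)$ is again an odd-degree element.
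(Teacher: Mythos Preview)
Your proposal is correct and follows essentially the same strategy as the paper: verify each defining relation of $W_{B_n}$ on the generators $x_j$ and $\omega_j$. Your treatment is slightly more conceptual than the paper's explicit computations---you reduce the type A braid relation to the Naisse--Vaz case via the substitution $x_k\mapsto x_k^2$, and you dispose of the type B braid relation by observing that $s_n$ fixes every $\omega_j$ and every $x_j^2$ (whereas the paper computes both sides of $s_{n-1}s_ns_{n-1}s_n(\omega_{n-1})=s_ns_{n-1}s_ns_{n-1}(\omega_{n-1})$ directly)---but the underlying argument is the same.
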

\begin{proof} First, we check that the type A relations are satisfied. Note that, for any $i<n$, we have $$s_i^2(\omega_i) = \omega_i + (x_i^2 - x_{i+1}^2)\omega_{i+1} + (x_{i+1}^2- x_i^2)\omega_{i+1}$$ $$= \omega_i.$$ Next, note that, for any $i<n-1$, $$s_i s_{i+1} s_i(\omega_i) = \omega_i + (x_i^2 - x_{i+1}^2) \omega_{i+1} + (x_{i+1}^2 - x_{i+2}^2)\omega_{i+1} + ( x_{i+1}^2 - x_{i+2}^2)(x_i^2-x_{i+2}^2) \omega_{i+2} = s_{i+1} s_i(\omega_i).$$ Clearly $s_{i+1} s_i s_{i+1}(\omega_i) = s_{i+1} s_i(\omega_i)$, so this braid relation is satisfied. Finally, we have $$s_is_{i+1}s_i(\omega_{i+1}) = s_i s_{i+1}(\omega_{i+1}) = \omega_{i+1}+(x_i^2 - x_{i+2}^2)\omega_{i+2}.$$ On the other hand, $$s_{i+1} s_i s_{i+1}(\omega_{i+1} ) = s_{i+1}( \omega_{i+1}+(x_i^2 - x_{i+2}^2)\omega_{i+2})$$ $$= \omega_{i+1}+(x_i^2 - x_{i+2}^2)\omega_{i+2}.$$ Hence, all type A relations are satisfied.

 Now it remains to check that the action of $W_{B_n}$ on $\omega_n$ satisfies the type B relations. We compute: $$s_{n-1} s_n s_{n-1} s_n (\omega_n) = s_n s_{n-1} s_n s_{n-1} (\omega_n) = \omega_n.$$ Next, note that $$
s_{n-1} s_n s_{n-1} s_n (\omega_{n-1}) = \omega_{n-1} + 2x_{n-1}^2 \omega_n.$$ Furthermore, since both $\omega_{n-1}$ and $ \omega_{n-1} + 2x_{n-1}^2 \omega_n$ are symmetric with respect to $s_n$, it follows that the latter is equal to $s_n s_{n-1} s_n s_{n-1} (\omega_{n-1})$. It's easy to see that the action of $s_n$ commutes with that of $s_i$ for any $1\leq i \leq n$. \end{proof}

This action induces an action of type B Demazure operators. Define the Demazure operator $\partial_i$ by $$\partial_i = \frac{\id - s_i}{x_i - s_i(x_i)}.$$ In particular, note that $$\partial_n = \frac{\id - s_n}{2 x_n}.$$ It's easy to check the following actions: for $1\leq i \leq n-1$ and $1\leq j \leq n$ we have $$\partial_i(x_j) = \left\{\begin{array}{lr} 1 & \ \text{if }i=j \\ -1 &\ \text{if }j=i+1 \\  0&\ \text{else,}\end{array}\right. \qquad \partial_i(\omega_j) = -\delta_{ij} (x_i+x_{i+1})w_{i+1}.$$ Finally, we have $$\partial_n(x_j) = \delta_{jn} \qquad \partial_n(\omega_j) = 0$$ for all $1\leq j \leq n$. Extend this action to an arbitrary polynomial by the Leibniz rule $$\partial_i(fg) = \partial_i(f) g + s_i(f) \partial_i(g).$$ \begin{lemma} The above defines an action of the type B Demazure operators on $\mathsf{P}_n^{ext}$. \end{lemma}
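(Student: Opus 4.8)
The plan is to prove two things: that each $\partial_i$ is a well-defined $\QQ$-linear endomorphism of $\mathsf{P}_n^{ext}$ (so that the formula $\partial_i = (\id - s_i)/(x_i - s_i(x_i))$, which a priori requires inverting $x_i - s_i(x_i)$, really lands back inside $\mathsf{P}_n^{ext}$), and that the resulting operators $\partial_1,\dots,\partial_n$ satisfy the nil-Coxeter relations of type $B_n$: $\partial_i^2 = 0$ for all $i$; $\partial_i\partial_j = \partial_j\partial_i$ for $|i-j| \ge 2$; the braid relation $\partial_i\partial_{i+1}\partial_i = \partial_{i+1}\partial_i\partial_{i+1}$ for $i \le n-2$; and $\partial_{n-1}\partial_n\partial_{n-1}\partial_n = \partial_n\partial_{n-1}\partial_n\partial_{n-1}$. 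Taking $\partial_i = (\id - s_i)/(x_i - s_i(x_i))$ as the definition, the displayed values of $\partial_i(x_j)$, $\partial_i(\omega_j)$ and the Leibniz rule are then immediate identities rather than separate obligations.

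For well-definedness I would set $\alpha_i = x_i - s_i(x_i)$, so $\alpha_i = x_i - x_{i+1}$ for $i<n$ and $\alpha_n = 2x_n$; in either case $\alpha_i$ is a nonzerodivisor of $\mathsf{P}_n^{ext}$, which is free over the domain $\QQ[x_1,\dots,x_n]$. It then suffices to check that $s_i$ descends to the identity automorphism of $\mathsf{P}_n^{ext}/(\alpha_i)$: this gives $(\id - s_i)(\mathsf{P}_n^{ext}) \subseteq (\alpha_i)$, and dividing by the nonzerodivisor $\alpha_i$ yields a well-defined map into $\mathsf{P}_n^{ext}$. Since $s_i$ is a ring automorphism, it is enough to see it fixes the generators modulo $\alpha_i$: for $i < n$ the only nonobvious cases are $x_i \equiv x_{i+1}$ and $s_i(\omega_i) = \omega_i + (x_i-x_{i+1})(x_i+x_{i+1})\omega_{i+1} \equiv \omega_i$ mod $\alpha_i$, while for $i = n$ the map $s_n$ already fixes every $\omega_j$ and every $x_j$ with $j<n$, and $x_n \equiv -x_n \pmod{2x_n}$.

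For the relations I would pass to the localization $\mathbb{F}\otimes\bigwedge[\omega_1,\dots,\omega_n]$ with $\mathbb{F} = \QQ(x_1,\dots,x_n)$, which still carries the $W_{B_n}$-action of Lemma~1 and on which $\partial_i = m_{\alpha_i^{-1}}\circ(\id - s_i)$, where $m_c$ denotes multiplication by $c$. Using $s_i\circ m_c = m_{s_i(c)}\circ s_i$ and the $s_i$-stability of $\mathbb{F}$, any composite $\partial_{i_1}\cdots\partial_{i_\ell}$ expands as $\sum_{v\in W_{B_n}} m_{c_v}\circ v$ with coefficients $c_v \in \mathbb{F}$ computed by a recursion that only ever involves the $W_{B_n}$-action on $\mathbb{F}$. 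But that action on $\mathbb{F}$ is precisely the classical type $B$ reflection action on $\QQ(x_1,\dots,x_n)$, so $\{c_v\}$ is exactly the coefficient family produced by the corresponding product of ordinary type $B$ divided-difference operators. For each of the four relations, the classical theory (cf.~\cite{Ku, FK}) supplies the corresponding identity of operators on $\QQ(x_1,\dots,x_n)$, and by Dedekind independence of the distinct field automorphisms $v|_{\mathbb{F}}$ this is equivalent to equality of the two coefficient families $\{c_v\}$. Since in the extended setting both sides of the relation expand with these same (equal) families, they coincide as operators on $\mathbb{F}\otimes\bigwedge[\omega_1,\dots,\omega_n]$, hence on $\mathsf{P}_n^{ext}$.

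The one step genuinely particular to the extended setting --- and the place I expect to have to be careful --- is the claim that the coefficients $c_v$ never pick up $\omega$-dependence, which is what lets the classical relations be imported verbatim; it holds because $\alpha_i \in \QQ[x_1,\dots,x_n]$ and $W_{B_n}$ preserves $\QQ[x_1,\dots,x_n]$, acting there by the ordinary type $B$ formulas. I would also include a short self-contained proof of $\partial_i^2 = 0$ as a check: from $s_i\circ\partial_i = \partial_i$ and $\partial_i\circ s_i = -\partial_i$ the cross terms in the twofold application of the Leibniz rule cancel, so $\partial_i^2$ is an ordinary, $\omega$-degree-preserving derivation of $\mathsf{P}_n^{ext}$; it annihilates every generator, since $\partial_i(x_j)\in\QQ$ and $\partial_i\big((x_i+x_{i+1})\omega_{i+1}\big) = \partial_i(x_i+x_{i+1})\,\omega_{i+1} + s_i(x_i+x_{i+1})\,\partial_i(\omega_{i+1}) = 0$; hence $\partial_i^2 = 0$ on all of $\mathsf{P}_n^{ext}$.
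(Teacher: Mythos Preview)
Your proof is correct and takes a genuinely different route from the paper. The paper proceeds by direct computation on the generators $\omega_j$: for each nil-Coxeter relation it evaluates both sides on the relevant $\omega_i$, $\omega_{i+1}$, $\omega_n$, etc., and checks they agree (relying implicitly on the classical case for the $x_j$'s and on the Leibniz rule to extend to products). You instead work at the level of the twisted group ring: expanding any $\partial_{i_1}\cdots\partial_{i_\ell}$ as $\sum_{v} m_{c_v}\circ v$ with $c_v\in\mathbb{F}$, you observe that the recursion producing the $c_v$ uses only the classical $W_{B_n}$-action on $\mathbb{F}$, so the coefficients are literally those of the ordinary type~$B$ divided-difference operators; Dedekind independence on $\mathbb{F}$ then converts the known classical relations into equality of coefficient families, which forces equality of operators in the extended setting as well. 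Your approach is more conceptual --- it explains \emph{why} the relations transfer once the $W_{B_n}$-action of Lemma~1 is established, and would apply verbatim to any algebra extension on which $W_{B_n}$ acts through ring automorphisms restricting to the standard action on $\QQ[x_1,\dots,x_n]$ --- whereas the paper's approach is more elementary and self-contained, avoiding localization and the appeal to independence of automorphisms. Your well-definedness argument via reduction modulo $\alpha_i$ is also a clean addition; the paper leaves that step implicit.
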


\begin{proof} We again first check the type A relations. Note that for any $i$, $$\partial_i^2(\omega_i) = \partial_i(-(x_i + x_{i+1})\omega_{i+1}) = 0.$$ Now, for any $i<n-1$, we have \begin{align*} \partial_i \partial_{i+1} \partial_i(\omega_i) &= \partial_i\partial_{i+1}(-(x_i + x_{i+1})\omega_{i+1}) \\&= \partial_1(\omega_2 + (x_1 + x_3)(x_2 + x_3) \omega_3) \\ &= (x_3 - x_3)\omega_3 \\&=0. \end{align*} Clearly $\partial_2 \partial_1 \partial_2(\omega_1) = 0$. For the other braid relation, note that $$\partial_2 \partial_1 \partial_2(\omega_2) = \partial_2 (-\omega_3) = 0,$$ whereas again $\partial_1\partial_2\partial_1(\omega_2)=0$ immediately. Hence, the type A relations are satisfied.

It remains to check the extra type B relations on generators: first note that $$\partial_{n-1} \partial_n \partial_{n-1} \partial_n (\omega_n) = 0 = \partial_n \partial_{n-1} \partial_n \partial_{n-1}(\omega_n).$$ Also, we have $$\partial_{n-1} \partial_n \partial_{n-1} \partial_n (\omega_{n-1}) = 0,$$ whereas $$\partial_n \partial_{n-1} \partial_n \partial_{n-1} (\omega_{n-1}) = -\partial_n \partial_{n-1} \partial_n(\omega_n) = 0.$$ It is clear that the action of $\partial_n$ commutes with that of $\partial_i$, $1\leq i \leq n$. 
\end{proof}

\subsection{Extended nilHecke algebra}

Define the extended nilHecke algebra of type $B$, $\nhb$, to be the $\QQ$-superalgebra generated by multiplication in $\mathsf{P}_n^{ext}$, together with the action of the Demazure operators on extended polynomials. The generators $\omega_i$ are odd, and all other generators are even. There is an additional $\ZZ$-grading, with $\deg(x_i) = 1$, $\deg(\partial_i)=-1$, and $\deg(\omega_i) = -2i.$ We give a presentation of this algebra in terms of generators and relations and prove a PBW-type theorem.

The degenerate affine Hecke algebra in type B (and in all finite types) was described in \cite{Lu}. It's nil version, the type B nilHecke algebra ${}^{\mathfrak{b}}\mathsf{NH}_n$, is the $\QQ$-algebra generated by $\partial_1, \ldots, \partial_n$ and $x_1, \ldots, x_n$, with relations $x_ix_j = x_jx_i$ and, for $i,j<n$, $$ \partial_i^2 = 0, \qquad \partial_i\partial_j = \partial_j \partial_i\ \text{if } |i-j|>1\qquad  \ 1\leq i \leq n,$$ $$ \partial_i\partial_{i+1}\partial_i = \partial_{i+1}\partial_i \partial_{i+1}, \ 1\leq i <n-1, $$ $$\partial_{n-1}\partial_n \partial_{n-1} \partial_n = \partial_n\partial_{n-1} \partial_n \partial_{n-1} $$ $$\partial_i x_j = x_j \partial_i \ \text{if }|i-j|>1, \qquad \partial_i x_i - x_{i+1} \partial_i = 1 , \qquad \partial_i x_{i+1} - x_i \partial_i = -1 \ 1\leq i <n,$$ $$\partial_n x_n  + x_n \partial_n = 1, \qquad x_n \partial_n + \partial_n x_n = -1.$$ This is a $\ZZ$-graded algebra with $\deg(x_i) = 1$ and $\deg(\partial_i) = -1$.

\begin{proposition} There is an isomorphism of $\QQ$-superalgebras $$\nhb_n \cong {}^{\mathfrak{b}}\mathsf{NH}_n \rtimes \bigwedge[\omega_1, \ldots, \omega_n],$$ where the generators on the right-hand side satisfy the relations $x_i \omega_j = \omega_j x_i $ for all $i$ and $j$, as well as $$\partial_i \omega_j = \omega_j \partial_i \quad i\not=j,$$ $$\partial_i(\omega_i - x_{i+1}^2 \omega_{i+1}) = (\omega_i - x_{i+1}^2 \omega_{i+1}) \partial_i, \ i<n,$$ $$\partial_n\omega_i = \omega_i \partial_n\quad\ 1\leq i \leq n.$$ \end{proposition}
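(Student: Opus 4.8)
The plan is to identify $\nhb_n$ with the abstract $\QQ$-superalgebra $A$ on generators $x_i,\partial_i,\omega_i$ subject to the relations in the statement---which is what ${}^{\mathfrak{b}}\mathsf{NH}_n\rtimes\bigwedge[\omega_1,\ldots,\omega_n]$ denotes---by the usual three moves: construct a surjection $\phi\colon A\twoheadrightarrow\nhb_n$, exhibit a spanning set of $A$ of the expected size, and prove that its image in $\End(\mathsf{P}_n^{ext})$ is linearly independent. First I would check that each relation defining $A$ holds among the operators on $\mathsf{P}_n^{ext}$. The type B nilHecke relations among the $x_i,\partial_i$ hold on all of $\mathsf{P}_n^{ext}$: by Lemmas~1 and~2 they hold on the algebra generators $x_i,\omega_i$ (the braid relations on the $\omega_j$ are exactly Lemma~2, and the relations mixing $x_i$ with $\partial_j$, applied to the $\omega_k$, are a one-line Leibniz check), and since $\QQ[x]\otimes 1$ is a submodule on which this restricts to the classical faithful representation of ${}^{\mathfrak{b}}\mathsf{NH}_n$, we obtain an embedding ${}^{\mathfrak{b}}\mathsf{NH}_n\hookrightarrow\nhb_n$. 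The exterior relations among the $\omega_i$, the relations $x_i\omega_j = \omega_j x_i$, and $\partial_n\omega_j = \omega_j\partial_n$ (since $\partial_n(\omega_j)=0$ and $s_n(\omega_j)=\omega_j$) are immediate. The one relation requiring real computation is $\partial_i(\omega_i - x_{i+1}^2\omega_{i+1}) = (\omega_i - x_{i+1}^2\omega_{i+1})\partial_i$: writing $\eta_i := \omega_i - x_{i+1}^2\omega_{i+1}$, one verifies $s_i(\eta_i) = \eta_i$ and $\partial_i(\eta_i) = 0$---the shift $s_i(\omega_i) = \omega_i + (x_i^2 - x_{i+1}^2)\omega_{i+1}$ is precisely what makes $\eta_i$ invariant---after which the Leibniz rule gives $\partial_i\circ(\eta_i\,\cdot\,) = (\eta_i\,\cdot\,)\circ\partial_i$. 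As $\nhb_n$ is generated by the $x_i,\omega_i$ (as multiplication operators) together with the $\partial_i$, all of which lie in the image, this produces $\phi$.

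For the spanning set: writing $\omega_S = \omega_{i_1}\cdots\omega_{i_r}$ for $S = \{i_1 < \cdots < i_r\}$ and $x^a = x_1^{a_1}\cdots x_n^{a_n}$, I would show that $\{x^a\partial_w\omega_S : a\in\ZZ_{\geq 0}^n,\ w\in W_{B_n},\ S\subseteq\{1,\ldots,n\}\}$ spans $A$. Reducing $\partial_i\eta_i = \eta_i\partial_i$ modulo the relations of ${}^{\mathfrak{b}}\mathsf{NH}_n$ (using $\partial_i x_{i+1}^2 = x_i^2\partial_i - (x_i + x_{i+1})$) yields a straightening rule of the form
\[
\partial_i\omega_i = \omega_i\partial_i + (x_i^2 - x_{i+1}^2)\,\omega_{i+1}\partial_i - (x_i + x_{i+1})\,\omega_{i+1},
\]
and together with $\partial_i\omega_j = \omega_j\partial_i$ for $j\neq i$, $\partial_n\omega_j = \omega_j\partial_n$, and $x_i\omega_j = \omega_j x_i$ this lets one push every $\omega$ past every $x$ and $\partial$ in an arbitrary word of $A$; the rule strictly raises the $\omega$-index, so the procedure terminates, after which the $\omega$'s are reordered using the exterior relations and the $x,\partial$ part is rewritten via the known basis $\{x^a\partial_w\}$ of ${}^{\mathfrak{b}}\mathsf{NH}_n$.

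The move carrying the real content is the linear independence of $\{x^a\partial_w\omega_S\}$ in $\End(\mathsf{P}_n^{ext})$, which makes $\phi$ injective. The key point is that $\partial_i(\omega_1\cdots\omega_n) = 0$ and $s_i(\omega_1\cdots\omega_n) = \omega_1\cdots\omega_n$ for every $i$ (any term that could survive contains a repeated $\omega_{i+1}$), and hence $\partial_w(q\cdot\omega_1\cdots\omega_n) = \partial_w(q)\cdot\omega_1\cdots\omega_n$ for all $q\in\QQ[x]$. Given a relation $\sum c_{a,w,S}\,x^a\partial_w\omega_S = 0$, I would fix $S_0\subseteq\{1,\ldots,n\}$ and apply it to $q\otimes\omega_{\{1,\ldots,n\}\setminus S_0}$ for arbitrary $q$: a term with index $S$ contributes only when $S\subseteq S_0$, in which case $\omega_S$ produces $\pm\,q\otimes\omega_{S\cup(\{1,\ldots,n\}\setminus S_0)}$, which lies in the top exterior power precisely when $S = S_0$; the contributions with $S\subsetneq S_0$ remain in $\QQ[x]\otimes\bigwedge^{<n}$, a subspace preserved by $\partial_w$ and by multiplication by $x^a$. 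Projecting onto $\QQ[x]\otimes(\omega_1\cdots\omega_n)$ and applying the identity above collapses the relation to $\sum_{a,w}c_{a,w,S_0}\,x^a\partial_w(q) = 0$ for all $q\in\QQ[x]$, whereupon faithfulness of ${}^{\mathfrak{b}}\mathsf{NH}_n$ on $\QQ[x]$ forces $c_{a,w,S_0} = 0$ for all $a,w$; since $S_0$ is arbitrary, the operators are independent.

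Putting the three moves together, $\{x^a\partial_w\omega_S\}$ both spans $A$ and maps to a linearly independent subset of $\nhb_n$, so it is a basis of $A$ on which $\phi$ is injective, and therefore $\phi$ is an isomorphism---this simultaneously establishes the asserted PBW-type basis of $\nhb_n$. I expect the last move to be the main obstacle, and within it the thing to get right is the choice of ``leading term'': restricting to the top exterior power, where the Demazure operators act exactly as on ordinary polynomials, is what reduces everything to the known faithful polynomial representation of ${}^{\mathfrak{b}}\mathsf{NH}_n$. The first two moves are routine Leibniz-rule and straightening manipulations, with the exterior-algebra signs the only genuine nuisance.
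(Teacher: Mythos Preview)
Your argument is correct and follows the paper's overall strategy. The paper's proof of this proposition is only the relation check: it observes $\partial_i(\omega_i)=-(x_i+x_{i+1})\omega_{i+1}=\partial_i(x_{i+1}^2\omega_{i+1})$, which is equivalent to your verification that $\eta_i$ is $s_i$-invariant with $\partial_i(\eta_i)=0$, and dismisses the remaining relations as ``clear.'' The spanning and linear-independence steps are deferred to the subsequent PBW theorem, where the paper simply asserts independence ``since $\nhb$ acts faithfully'' without indicating how to separate the operators. Your top-exterior-power argument---testing on $q\,\omega_{\{1,\dots,n\}\setminus S_0}$ and projecting to $\QQ[x]\cdot\omega_1\cdots\omega_n$, where each $\partial_w$ reduces to its ordinary polynomial action---supplies exactly the detail the paper omits, and cleanly reduces the question to the classical faithfulness of ${}^{\mathfrak{b}}\mathsf{NH}_n$ on $\QQ[x]$.
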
 \begin{proof} We only need to prove the additional relations.  The first two and the last relations are clear. Finally, note that $$\partial_i(\omega_i) = -(x_i + x_{i+1})\omega_{i+1} = \partial_i (x_{i+1}^2 \omega_{i+1}),$$ which implies the remaining relation. \end{proof} 

Using these relations and the action on $\mathsf{P}_n^{ext}$, we obtain the following PBW basis for $\nhb$. For $w\in W_{B_n}$ with reduced expression $w= s_{i_1} \ldots s_{i_k}$, set $\partial_w = \partial_{i_1} \ldots \partial_{i_k}.$ 

\begin{theorem}\label{pbwbasis} The superalgebra $\nhb$ has a $\QQ$-basis given by $$\{x_i^{k_1} \ldots x_n^{k_n} \omega_i^{\epsilon_1} \ldots \omega_n^{\epsilon_n} \partial_w | k_i\in \mathbb{N},\ \epsilon_i \in \{0,1\},\ w\in W_{B_n}\}.$$ \end{theorem}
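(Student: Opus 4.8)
The plan is to establish the two halves of the statement separately — that the indicated set spans $\nhb_n$, and that it is $\QQ$-linearly independent — working throughout with the defining action on $\mathsf{P}_n^{ext}$ and the relations of the preceding Proposition.

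\emph{Spanning.} Since $\nhb_n$ is generated by the $x_i$, the $\omega_i$ and the $\partial_i$, and the proposed set contains $1$, it suffices to show that its $\QQ$-span $S$ is stable under left multiplication by each generator. For $x_i$ this is immediate from $x_ix_j = x_jx_i$ and $x_i\omega_j = \omega_j x_i$; for $\omega_i$ it follows by reordering the exterior factors with the appropriate signs and using $\omega_i^2 = 0$. The substantive case is left multiplication by $\partial_i$ on a monomial $x^{\mathbf k}\omega^{\bm\epsilon}\partial_w$. First, by the twisted Leibniz rule for $\partial_i$ one has, for any polynomial $f$, the operator identity $\partial_i f = s_i(f)\partial_i + \partial_i(f)$; applying it with $f = x^{\mathbf k}$ (a single monomial up to sign, with $\partial_i(x^{\mathbf k})$ a polynomial) reduces us to $s_i(x^{\mathbf k})\,\partial_i\omega^{\bm\epsilon}\partial_w$ plus a correction already lying in $S$. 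Second, the Proposition gives $\partial_i\omega_j = \omega_j\partial_i$ for $j\neq i$ and $\partial_n\omega_j = \omega_j\partial_n$ for all $j$; combined with the computation $\partial_i x_{i+1}^2 = x_i^2\partial_i - x_i - x_{i+1}$ these yield, for $i<n$, the identity
$$\partial_i\omega_i = \omega_i\partial_i + (x_i^2 - x_{i+1}^2)\,\omega_{i+1}\partial_i - (x_i + x_{i+1})\,\omega_{i+1}.$$
Thus moving $\partial_i$ rightward through $\omega^{\bm\epsilon}$ produces $\omega^{\bm\epsilon}\partial_i$ when $\epsilon_i = 0$ or $i = n$, and when $\epsilon_i = 1$, $i<n$, a sum of $\omega^{\bm\epsilon}\partial_i$ together with terms $(\text{polynomial})\,\omega^{\bm\epsilon'}\partial_i$ and $(\text{polynomial})\,\omega^{\bm\epsilon'}$, where $\bm\epsilon'$ replaces $\omega_i$ by $\omega_{i+1}$ in $\bm\epsilon$ (these vanish if $\epsilon_{i+1}=1$, as $\omega_{i+1}^2=0$); in all cases the exterior factor is in standard order and at most one $\partial_i$ remains, immediately to the left of $\partial_w$. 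Finally $\partial_i\partial_w = \partial_{s_iw}$ if $\ell(s_iw) > \ell(w)$ and $\partial_i\partial_w = 0$ otherwise. Assembling these steps expresses $\partial_i\cdot x^{\mathbf k}\omega^{\bm\epsilon}\partial_w$ as a finite $\QQ$-combination of the claimed monomials, so $S = \nhb_n$.

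\emph{Linear independence.} Since $\nhb_n$ acts by definition faithfully on $\mathsf{P}_n^{ext}$, and each $\partial_j$ restricts on the polynomial subring $\QQ[\mathbf x]\subseteq \mathsf{P}_n^{ext}$ to the ordinary type B Demazure operator, we have $\partial_w(p)\in\QQ[\mathbf x]$ for every $p\in\QQ[\mathbf x]$ and every $w\in W_{B_n}$. A generic $\QQ$-linear combination of the proposed monomials, after grouping, takes the form $\sum_{\bm\epsilon,w} f_{\bm\epsilon,w}\,\omega^{\bm\epsilon}\partial_w$ with $f_{\bm\epsilon,w}\in\QQ[\mathbf x]$, and vanishing of all $f_{\bm\epsilon,w}$ is equivalent to vanishing of all original coefficients. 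Evaluating this operator on an arbitrary $p\in\QQ[\mathbf x]$, and using that $\omega^{\bm\epsilon}$ commutes with polynomials, yields $\sum_{\bm\epsilon,w} f_{\bm\epsilon,w}\,\partial_w(p)\,\omega^{\bm\epsilon}$. Since $\mathsf{P}_n^{ext}$ is a free $\QQ[\mathbf x]$-module on the monomials $\omega^{\bm\epsilon}$, vanishing of this element forces $\sum_{w} f_{\bm\epsilon,w}\,\partial_w(p) = 0$ for every $\bm\epsilon$ and every $p$, i.e.\ $\sum_{w} f_{\bm\epsilon,w}\partial_w = 0$ as an operator on $\QQ[\mathbf x]$ for each $\bm\epsilon$. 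The classical PBW theorem for the type B nilHecke algebra — equivalently, the $\QQ[\mathbf x]$-linear independence of the Demazure operators $\partial_w$, $w\in W_{B_n}$, on $\QQ[\mathbf x]$ (cf.\ \cite{Ku,FK}) — then forces all $f_{\bm\epsilon,w} = 0$.

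The delicate part is the bookkeeping in the spanning step: the type B shifts introduce the polynomial factors $x_i^2 - x_{i+1}^2$ and $x_i + x_{i+1}$, absent in type A, and one must check these do not spoil the normal form. They do not, because every such correction still involves only the single additional exterior generator $\omega_{i+1}$ and a single $\partial_i$, which is immediately absorbed into $\partial_w$; in particular, multiplying one normal-form monomial by $\partial_i$ terminates after a single pass, so no termination argument for a rewriting system is required. By contrast, linear independence — once reduced via the polynomial subrepresentation — is immediate from the classical type B statement.
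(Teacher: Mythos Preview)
Your proof is correct and follows the same two-step strategy as the paper: rewrite via the relations to show spanning, then invoke the faithful action on $\mathsf{P}_n^{ext}$ for linear independence. The paper's own argument is two sentences long; you have simply supplied the details it suppresses---in particular, the explicit commutation identity $\partial_i\omega_i = \omega_i\partial_i + (x_i^2-x_{i+1}^2)\omega_{i+1}\partial_i - (x_i+x_{i+1})\omega_{i+1}$ for the spanning step, and the reduction (via evaluation on $\QQ[\mathbf x]$ and the $\QQ[\mathbf x]$-freeness of $\mathsf{P}_n^{ext}$ on the $\omega^{\bm\epsilon}$) to the classical $\QQ[\mathbf x]$-linear independence of $\{\partial_w\}_{w\in W_{B_n}}$ for the independence step, which makes precise what the paper's bare appeal to ``faithfulness'' is really using.
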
 \begin{proof} Using the relations, we can put any given monomial in this form, so this set spans $\nhb$. But $\nhb$ acts faithfully on $\mathsf{P}_n^{ext}$, which implies that this set is linearly independent and thus a basis. \end{proof} 

Note that we could obtain alternative bases for $\nhb$ by exchanging the positions of the $x_i$'s, $\omega_i$'s, and $\partial_w$ in the basis given by Theorem \ref{pbwbasis}.

\subsection{Differentials}

This section is joint with Matthew Hogancamp.

The algebra $\nhb$ has a natural differential graded (DG) structure. Recall that a DG-algebra is a $\ZZ$-graded unital algebra $A$ with a degree $1$ differential $d:A\rightarrow A$ satisfying $d^2=0,$ $d(1)=0$ and, for $a,b\in A$, \begin{equation}\label{dgstructure} d(ab ) = d(a)b + (-1)^{|a||b|}a d(b).\end{equation}

For each $N>0$, define a grading on $\nhb_k$ by setting $\deg_N(x_i) = 1$, $\deg_N(\partial_i) = -1$ and $\deg_N(\omega_i) = 2(N-i)+1$. Define a map ${}^{\mathfrak{b}}d_N:\nhb_k \rightarrow \nhb_k$ by setting $$\db_N(x_i) = \db_N(\partial_i) = 0 \qquad \db_N(\omega_i) = (-1)^i\  {}^{\mathfrak{b}}h_{N-i+1}(1,i)$$ where ${}^{\mathfrak{b}}h_{n}(i,j)$ is the complete homogeneous symmetric polynomial in the variables $x_i^2, x_{i+1}^2, \ldots, x_j^2$ (the type B homogeneous symmetric polynomial). 

\begin{proposition} For each $N$, the map $\db_N$ defines a differential making $\nhb_k$ into a DG-algebra with grading given by $\deg_N$. \end{proposition}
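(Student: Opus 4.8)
The plan is to verify that $\db_N$ is well-defined as a degree-$1$ derivation on $\nhb_k$ and that $\db_N^2 = 0$. Since $\nhb_k$ is generated by the $x_i$, the $\partial_i$, and the $\omega_i$, it suffices to check that $\db_N$ respects all the defining relations of the presentation in Proposition~2.5 (the type B nilHecke relations among the $x_i$ and $\partial_i$, together with the cross relations involving the $\omega_i$), and then that $\db_N^2$ vanishes on generators. The derivation property~\eqref{dgstructure} then propagates $d^2 = 0$ to all of $\nhb_k$ by a standard induction on word length.

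First I would check the degree condition: with $\deg_N(\omega_i) = 2(N-i)+1$, the element ${}^{\mathfrak{b}}h_{N-i+1}(1,i)$ is a homogeneous symmetric polynomial in $x_1^2,\dots,x_i^2$ of degree $2(N-i+1) = 2(N-i)+2$, so $\db_N(\omega_i)$ has degree exactly one more than $\omega_i$, as required; and $\db_N(x_i) = \db_N(\partial_i) = 0$ trivially raises degree by $1$ vacuously. Next, the relations purely among $x_i,\partial_i$ are sent to zero on both sides since $\db_N$ kills these generators, so~\eqref{dgstructure} gives $\db_N$ of each such relation equal to $0$ automatically. The relations $x_i\omega_j = \omega_j x_i$ and $\partial_n\omega_i = \omega_i\partial_n$ and $\partial_i\omega_j=\omega_j\partial_i$ for $i\neq j$ follow because $\db_N(\omega_j)$ is a polynomial in the $x^2$'s, hence commutes with all $x_i$, commutes with $\partial_n$ (which acts trivially on symmetric functions of the $x^2$'s, as $\partial_n(x_j)=\delta_{jn}$ forces $\partial_n(x_j^2)=0$ — wait, more carefully, $\partial_n(x_n^2) = x_n + s_n(x_n) = 0$), and commutes with $\partial_i$ for $i\neq j$ provided the polynomial ${}^{\mathfrak{b}}h_{N-j+1}(1,j)$ is symmetric in $x_i^2,x_{i+1}^2$ — which it is when $i,i+1 \leq j$, and when $i \geq j$ the variables $x_i^2$ simply don't appear. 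The one genuinely substantive relation is $\partial_i(\omega_i - x_{i+1}^2\omega_{i+1}) = (\omega_i - x_{i+1}^2\omega_{i+1})\partial_i$ for $i<n$: applying $\db_N$ and~\eqref{dgstructure}, and using $\db_N(\partial_i) = \db_N(x_{i+1}^2) = 0$, this reduces to the identity $\partial_i\big({}^{\mathfrak{b}}h_{N-i+1}(1,i)\big) = x_{i+1}^2\,\partial_i\big({}^{\mathfrak{b}}h_{N-i}(1,i+1)\big)$ inside $\nhb_k$ (after accounting for the sign $(-1)^i$ vs $(-1)^{i+1}$), i.e. $\db_N(\omega_i) = x_{i+1}^2\,\db_N(\omega_{i+1})\cdot(\text{sign})$ paired with $\partial_i$; concretely one checks $\partial_i$ commutes past $(-1)^i\,{}^{\mathfrak{b}}h_{N-i+1}(1,i) - x_{i+1}^2(-1)^{i+1}\,{}^{\mathfrak{b}}h_{N-i}(1,i+1)$. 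This should follow from the recursion ${}^{\mathfrak{b}}h_{m}(1,i) = {}^{\mathfrak{b}}h_m(1,i-1) + x_i^2\,{}^{\mathfrak{b}}h_{m-1}(1,i)$ for complete homogeneous symmetric polynomials, together with the fact that ${}^{\mathfrak{b}}h_m(1,i)$, being symmetric in $x_i^2$ and $x_{i+1}^2$, is $\partial_i$-invariant in the nilHecke algebra.

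Finally I would verify $\db_N^2 = 0$ on generators: it is immediate on $x_i$ and $\partial_i$, and on $\omega_i$ we get $\db_N^2(\omega_i) = (-1)^i\,\db_N\big({}^{\mathfrak{b}}h_{N-i+1}(1,i)\big) = 0$ because $\db_N$ annihilates every $x_j$, hence every polynomial in the $x_j$. The main obstacle is the verification of the cross relation $\partial_i(\omega_i - x_{i+1}^2\omega_{i+1})=(\omega_i - x_{i+1}^2\omega_{i+1})\partial_i$ under $\db_N$: this is where the specific choice of $\deg_N(\omega_i)$ and of the polynomials ${}^{\mathfrak{b}}h_{N-i+1}(1,i)$ is forced, and it requires the recursion for type B complete homogeneous symmetric polynomials together with a careful bookkeeping of signs. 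Everything else is a routine consequence of $\db_N$ being a derivation that kills $x_i$ and $\partial_i$ and lands $\omega_i$ in the (commutative, $\partial_n$-invariant) subring $\QQ[x_1^2,\dots,x_n^2]$.
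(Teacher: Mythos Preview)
Your overall strategy---extend $\db_N$ as a super-derivation from generators and verify compatibility with the defining relations of Proposition~1---is sound and is essentially what the paper does, though the paper is terser about the routine checks. The one substantive verification in both approaches is the relation $\partial_i(\omega_i - x_{i+1}^2\omega_{i+1}) = (\omega_i - x_{i+1}^2\omega_{i+1})\partial_i$, and here your route differs from the paper's. The paper computes $\partial_i\bigl({}^{\mathfrak{b}}h_\ell(1,i)\bigr) = (x_i+x_{i+1})\,{}^{\mathfrak{b}}h_{\ell-1}(1,i+1)$ directly by a generating-function argument on $\prod_{j\le i}(1-tx_j^2)^{-1}$. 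Your proposal is instead to show that
\[
(-1)^i\,{}^{\mathfrak{b}}h_{N-i+1}(1,i) - (-1)^{i+1}x_{i+1}^2\,{}^{\mathfrak{b}}h_{N-i}(1,i+1)
= (-1)^i\bigl({}^{\mathfrak{b}}h_{N-i+1}(1,i) + x_{i+1}^2\,{}^{\mathfrak{b}}h_{N-i}(1,i+1)\bigr)
\]
is $s_i$-symmetric and hence commutes with $\partial_i$. This is actually more elementary: the recursion you quote (shifted by one, ${}^{\mathfrak{b}}h_m(1,i+1) = {}^{\mathfrak{b}}h_m(1,i) + x_{i+1}^2\,{}^{\mathfrak{b}}h_{m-1}(1,i+1)$) shows the bracket equals ${}^{\mathfrak{b}}h_{N-i+1}(1,i+1)$, which is visibly symmetric in $x_i^2,x_{i+1}^2$. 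The two identities are equivalent, but yours avoids the generating function altogether.

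One slip to fix: in your final sentence you write that ${}^{\mathfrak{b}}h_m(1,i)$ is ``symmetric in $x_i^2$ and $x_{i+1}^2$''. It is not---the variable $x_{i+1}$ does not appear in ${}^{\mathfrak{b}}h_m(1,i)$ at all. What is symmetric in $x_i^2,x_{i+1}^2$ is ${}^{\mathfrak{b}}h_m(1,i+1)$, which is precisely what the recursion produces from your combination. Similarly, in your check of $\partial_i\omega_j=\omega_j\partial_i$ for $i\neq j$, the phrase ``when $i\geq j$'' should read ``when $i>j$''. With these corrections the argument goes through.
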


\begin{proof}
Clearly $\db_N^2=0,$ $\db_N(1)=0$. Since $\deg_N({}^{\mathfrak{b}}h_{N-i+1}(1,i)) = 2(N-i+1) = \deg_N(\omega_i)+1$, we have $\deg(d_N) = 1$. 

Note that we have $$d_N(\omega_1) = -x_1^{2N}.$$ Since $$ \partial_i(\db_N(\omega_i)) =\db_N(\partial_i(\omega_i)) = -(x_i+x_{i+1})\db_N(\omega_{i+1}),$$ it suffices to show that \begin{equation}\label{homgenpolys}\partial_i({}^{\mathfrak{b}}h_{\ell}(1,i)) = -(x_i+x_{i+1}){}^{\mathfrak{b}}h_{\ell-1}(1,i+1).\end{equation} We apply a generating function argument. We have \begin{align*} \partial_i\left(\prod_{j=1}^i \frac{1}{1-tx_j^2}\right) &= \left(\prod_{j=1}^{i-1}\frac{1}{1-tx_j^2}\right) \left(\frac{1}{1-tx_i^2} - \frac{1}{1-tx_{i+1}^2}\right) \frac{1}{x_i-x_{i+1}} \\ &=  \left(\prod_{j=1}^{i-1}\frac{1}{1-tx_j^2}\right) \frac{tx_i^2 - tx_{i+1}^2}{(1-tx_i^2)(1-tx_{i+1}^2)(x_i-x_{i+1})} \\ &=  \left(\prod_{j=1}^{i-1}\frac{1}{1-tx_j^2}\right) \frac{t (x_i+x_{i+1})}{(1-tx_i^2)(1-tx_{i+1}^2)} \\ &= \left(\prod_{j=1}^{i+1} \frac{1}{1-tx_j^2}\right)\cdot t(x_i+x_{i+1}).\end{align*} The first expression is the generating function for $\partial_i({}^{\mathfrak{b}}h_{\ell}(1,i)) $, and the last expression is the generating function for  $(x_i+x_{i+1}){}^{\mathfrak{b}}h_{\ell}(1,i+1)$. The additional sign comes from the rule for taking the differential of a product in \eqref{dgstructure}. It follows that $\partial_i({}^{\mathfrak{b}}h_\ell(1,i)) =(x_i + x_{i+1}) {}^{\mathfrak{b}}h_{\ell-1}(1,i+1)$, as desired. 
\end{proof}

In type A, this DG-algebra structure provides a connection to the equivariant cohomology of Grassmannians via a quasi-isomorphism with cyclotomic quotients of the usual nilHecke algebra, cf. \cite[Proposition 8.3]{NV1} and \cite[Theorem 2.2]{AEHL}. The existence of differentials in the type B case may point to a similar geometric connection.

\section{Extended symmetric polynomials}
 In this section, we turn to investigate some of the combinatorial aspects of the extended type B nilHecke algebra and its action on extended polynomials. Following \cite{AEHL}, we define the extended symmetric polynomial ring: $${}^{\mathfrak{b}}\Lambda_n^{ext} := \bigcap_{i=1}^n \ker(\partial_i).$$ We will describe several bases of this algebra and demonstrate that $\nhb_n$ is a matrix algebra over it. 

\subsection{Low rank examples} Here we describe the structure of the extended symmetric polynomial ring for several low rank cases. These bases are computed directly by analyzing the action of each Demazure operator on a general extended polynomial, e.g. $\omega_1 + A\omega_2$, etc., and provide a framework for the general structure.

For $n>0$, let ${}^{\mathfrak{b}}\Lambda_n= \QQ[x_1,\ldots, x_n]^{W_{B_n}}$, the usual type B symmetric polynomials.

\noindent { $\underline{n=2}$}:  The algebra ${}^{\mathfrak{b}}\Lambda_2^{ext}$ is a free module of rank 4 over ${}^{\mathfrak{b}}\Lambda_2$ with basis $$\{1, \omega_1 + A \omega_2, \omega_2, \omega_1 \omega_2\}$$ where $A$ is a solution to the system $$\partial_1(A) = x_1+x_2, \quad\partial_2(A)=0.$$ For example, we could have $A= x_1^2 $ or $A=-(x_2^2 + x_3^2)$.

\noindent {$\underline{n=3}$}: The algebra ${}^{\mathfrak{b}}\Lambda_3^{ext}$ is a free module of rank 8 over ${}^{\mathfrak{b}}\Lambda_2$ with basis $$\begin{array}{cccc} 1 \quad &\omega_1 +A_1 \omega_2 + A_2\omega_3 \quad &\omega_2 + B \omega_3 \quad& \omega_3 \\ \omega_1 \omega_2 + C_1\omega_1 \omega_3 + C_2 \omega_2 \omega_3 \quad & \omega_1 \omega_3 + D\omega_2 \omega_3 \quad & \omega_2 \omega_3 \quad &\omega_1 \omega_2 \omega_3, \end{array},$$ where $A_i,B,C_i,D$ satisfy $$\partial_3(A_i) =0, \quad \partial_2(x)=0, \ \partial_2(A_2) =s_2(A_1) (x_2+x_3)\quad \partial_1(A_1) = x_1 +x_2, \ \partial_1(A_2) = 0; $$ $$\partial_1(B)=\partial_3(B)=0,\quad\partial_2(B)=x_2+x_3 ;$$ $$\partial_3(C_1) = \partial_3(C_2) = 0, \quad \partial_2(C_1) = x_2+x_3, \ \partial_2(C_2) = 0, \quad \partial_1(C_1)=0,\ \partial_1(C_2) = s_1(C_1)(x_1+x_2);$$ $$\partial_3(D) = \partial_2(D)=0, \quad \partial_1(D) = x_1 + x_2.$$ Hence, for example, the second basis element could take the form $\omega_1 + x_1^2\omega_2 + x_1^2x_2^2 \omega_3$, or $\omega_1 -(x_2^2+x_3^2) \omega_2 + x_3^4 \omega_3$. 

\subsection{Extended Schur polynomials}

The above bases can be realized as collections of extended Schur polynomials. Let $w_0\in W_{B_n}$ be the longest element; this takes the form $$(s_1 s_2 \ldots  s_n s_{n-1} \ldots s_1)(s_2 s_3 \ldots s_n s_{n-1} \ldots s_2)\ldots (s_{n-1} s_n s_{n-1}) s_n.$$ For any $w\in W_{B_n}$ with reduced expression $w= s_{i_1} \ldots s_{i_k},$ recall that $$\partial_w = \partial_{i_1}\ldots \partial_{i_k}.$$ 

For $\alpha= (\alpha_1, \ldots, \alpha_n)$ a partition (possibly with trailing zeros), denote by $$\underline{x}^{\delta+\alpha} = x_1^{2n-1+\alpha_1} x_2^{2n-3+\alpha_2} \ldots x_n^{\alpha_n}.$$ For a bounded strict partition $\beta$ of length $k\leq n$ with parts no larger than $n$, define $$\omega_\beta = \omega_{\beta_1} \omega_{\beta_2} \ldots \omega_{\beta_k}.$$ Finally, for any such partitions $\alpha$ and $\beta$, define the extended Schur polynomial $\mathcal{S}_{\alpha,\beta}$ by the following formula: $$\mathcal{S}_{\alpha,\beta} = \partial_{w_0}(\underline{x}^{\delta+\alpha} \omega_{\beta}).$$ Note that $\Sch_{\alpha,\beta} \in \ker \partial_i$ for all $i$, and hence is extended symmetric. 

The Schur polynomials with $\alpha=(0)$ and $\beta$ of length 1 have a regular structure in terms of homogeneous symmetric polynomials. 

\begin{lemma} We have $$\mathcal{S}_{0,i} = \sum_{\ell\geq i}(-1)^{n-i}\ {}^{\mathfrak{b}}\mathsf{h}_{\ell-i}(1,\ell-1)\omega_\ell,$$ where ${}^{\mathfrak{b}}\mathsf{h}_j(1,\ell-1)$ is the complete homogeneous symmetric polynomial in the variables $(x_1^2, x_{2}^2, \ldots, x_{\ell}^2)$. \end{lemma}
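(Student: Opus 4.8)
The plan is to evaluate $\mathcal{S}_{0,i}=\partial_{w_0}(\underline{x}^{\delta}\omega_i)$ directly, by induction on the rank $n$. Two preliminary remarks fix the shape of the answer. Since $\partial_{w_0}$ lowers degree by $\ell(w_0)=n^2$ and $\underline{x}^{\delta}\omega_i$ has degree $n^2-2i$, the element $\mathcal{S}_{0,i}$ is homogeneous of degree $-2i$, hence of the form $\sum_{\ell=i}^{n}c_\ell\,\omega_\ell$ with each $c_\ell$ homogeneous of degree $2(\ell-i)$; and since $\partial_j$ (for $j<n$) carries $\omega_j$ into $\QQ[\mathbf{x}]\,\omega_{j+1}$ and kills $\omega_k$ for $k\neq j$, while $\partial_n$ kills every $\omega_k$, the $\omega_i$-coefficient of $\mathcal{S}_{0,i}$ equals $\partial_{w_0}(\underline{x}^{\delta})=1$ (a standard fact of type B Schubert calculus, cf.\ \cite{Ku,FK}). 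The engine of the computation is the behaviour of complete homogeneous symmetric polynomials under the Demazure operators: the identity \eqref{homgenpolys} and its evident generalization $\partial_j\big({}^{\mathfrak{b}}h_\ell(a,j)\big)=-(x_j+x_{j+1})\,{}^{\mathfrak{b}}h_{\ell-1}(a,j+1)$ for $a\le j<n$, the classical companion $\partial_j\big(h_\ell(x_a,\dots,x_j)\big)=h_{\ell-1}(x_a,\dots,x_{j+1})$ for $j<n$, and $\partial_n\big(h_\ell(x_a,\dots,x_n)\big)=\sum_{k\ge0}x_n^{2k}\,h_{\ell-1-2k}(x_a,\dots,x_{n-1})$. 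Because these parallel the rule $\partial_j(\omega_j)=-(x_j+x_{j+1})\omega_{j+1}$, they govern how the $\omega$-coefficients evolve under the operators making up $\partial_{w_0}$.

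For the inductive step I would read off from the expression for $w_0$ in the text the reduced factorization $w_0=u_1\,w_0'$, where $u_1=s_1s_2\cdots s_n s_{n-1}\cdots s_1$ (length $2n-1$) acts as $x_1\mapsto -x_1$, fixes $x_2,\dots,x_n$, and fixes every $\omega_k$, while $w_0'=(s_2\cdots s_n\cdots s_2)\cdots(s_{n-1}s_ns_{n-1})s_n$ is the longest element of the parabolic $\langle s_2,\dots,s_n\rangle\cong W_{B_{n-1}}$ acting on $x_2,\dots,x_n$ and $\omega_2,\dots,\omega_n$; so $\partial_{w_0}=\partial_{u_1}\partial_{w_0'}$. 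Writing $\underline{x}^{\delta}=x_1^{2n-1}\underline{x}^{\delta'}$ with $\underline{x}^{\delta'}$ the $B_{n-1}$ staircase in $x_2,\dots,x_n$, and using that the $\partial_j$ with $j\ge2$ commute with multiplication by $x_1$, we get $\partial_{w_0'}(\underline{x}^{\delta}\omega_i)=x_1^{2n-1}\,\partial_{w_0'}(\underline{x}^{\delta'}\omega_i)$. For $i\ge2$ the inner factor is the rank-$(n-1)$ extended Schur polynomial $\mathcal{S}'_{0,i}$; by the inductive hypothesis its $\omega_\ell$-coefficient is a polynomial in $x_\ell,\dots,x_n$ with $\ell\ge i\ge2$, so $\mathcal{S}'_{0,i}$ involves neither $x_1$ nor $x_2$ and is therefore killed by $\partial_1$ as well as (being extended symmetric for the parabolic) by $\partial_2,\dots,\partial_n$. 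Hence $\partial_{u_1}\big(x_1^{2n-1}\mathcal{S}'_{0,i}\big)=\partial_{u_1}(x_1^{2n-1})\cdot\mathcal{S}'_{0,i}=\mathcal{S}'_{0,i}$, the last equality using $\partial_{u_1}(x_1^{2n-1})=1$, which follows by applying $\partial_{u_1}\partial_{w_0'}$ to $\underline{x}^{\delta}$ and invoking $\partial_{w_0}(\underline{x}^{\delta})=1$ in ranks $n$ and $n-1$. Thus $\mathcal{S}_{0,i}=\mathcal{S}'_{0,i}$ for $i\ge2$, which by the inductive hypothesis is the claimed formula. The case $i=1$ does not reduce this way — the parabolic $\langle s_2,\dots,s_n\rangle$ fixes $\omega_1$, so $\partial_{w_0'}(\underline{x}^{\delta}\omega_1)=x_1^{2n-1}\omega_1$, which is not killed by $\partial_1$ — and must be handled separately for every $n$.

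For $i=1$ one must therefore compute $\partial_{u_1}(x_1^{2n-1}\omega_1)$ outright. Expanding $\partial_{u_1}=\partial_1\circ(\partial_2\cdots\partial_n\cdots\partial_2)\circ\partial_1$ and applying the operators one at a time: the first $\partial_1$ produces $h_{2n-2}(x_1,x_2)\,\omega_1-x_2^{2n-1}(x_1+x_2)\,\omega_2$; the ascending run $\partial_2,\dots,\partial_{n-1}$ feeds new variables into the $h$'s by the classical identity while pushing $\omega_2$ up toward $\omega_n$; the central $\partial_n$ deletes odd powers of $x_n$ and so converts ordinary $h$'s into type B $h$'s in the $x_j^2$; and the descending run $\partial_{n-1},\dots,\partial_1$ collapses the coefficients to the type B complete homogeneous symmetric polynomials of the statement, the alternating signs coming from the minus signs in $\partial_j(\omega_j)=-(x_j+x_{j+1})\omega_{j+1}$ and in \eqref{homgenpolys}. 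The cleanest organization of this bookkeeping is a generating-function computation in the style of the proof of \eqref{homgenpolys}: apply the Demazure operators to $x_1^{2n-1}$ times the formal product $\prod_{j}(1-tx_j^2)^{-1}$ and read off powers of $t$. One then checks that the resulting element lies in $\bigcap_j\ker\partial_j$, as it must since $\mathcal{S}_{0,1}$ is extended symmetric. I expect this $i=1$ computation to be the main obstacle: it is a genuine calculation that does not reduce to lower rank, each $\partial_j$ with $j<n$ occurs twice in the word for $u_1$ and acts at two different stages (before and after $\omega_1$ has been pushed past index $j$), and the central $\partial_n$ obeys a different identity from the other operators, so no single recursion does the job. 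Keeping the generating-function manipulation and the signs under control is the real work; the rest of the argument is either formal or is absorbed into the rank-$(n-1)$ case.
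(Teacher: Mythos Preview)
Your approach is sound but genuinely different from the paper's. You argue by induction on the rank $n$ via the parabolic factorization $\partial_{w_0}=\partial_{u_1}\partial_{w_0'}$: for $i\ge2$ you reduce to the rank-$(n-1)$ statement, and for $i=1$ you propose a direct evaluation of $\partial_{u_1}(x_1^{2n-1}\omega_1)$, which you rightly identify as the main obstacle and leave only sketched.

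The paper takes a shorter path that avoids this obstacle entirely. Rather than evaluating $\partial_{w_0}$ explicitly, it uses that $\mathcal{S}_{0,i}\in\bigcap_j\ker\partial_j$ by construction. Writing $\mathcal{S}_{0,i}=\sum_{\ell\ge i}c_\ell\,\omega_\ell$ and applying $\partial_j$ term by term yields the recursion $\partial_j(c_{j+1})=(x_j+x_{j+1})\,c_j$, so every coefficient is determined by $c_n$ alone. The top coefficient $c_n$ is then identified directly: in the expression $\partial_{w_0}(\underline{x}^\delta\omega_i)$, the part landing on $\omega_n$ is governed by the ordinary (non-extended) type B Schur polynomial, a known complete homogeneous polynomial. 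The identity \eqref{homgenpolys} then unwinds the recursion to give the remaining $c_j$.

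The trade-off is this: your inductive reduction is transparent for $i\ge2$ but concentrates all the difficulty in the $i=1$ case, which does not reduce to lower rank and would have to be redone for every $n$; the paper's symmetry argument treats all $i$ uniformly and replaces your long $\partial_{u_1}$ calculation with a one-line identification of $c_n$ plus the already-established generating-function identity \eqref{homgenpolys}. In effect, the paper characterizes the image of $\partial_{w_0}$ indirectly rather than evaluating it head-on, and thereby sidesteps exactly the computation you flag as the real work.
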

\begin{proof} We use an argument inspired by \cite[Lemma 2.13]{NV2}.

Write $\Sch_{0,i} = \sum_{\ell\geq i} c_\ell \omega_\ell$ for some $c_\ell \in \ZZ[x_1,\ldots,x_n].$ Since $\Sch_{0,i}$ is symmetric, we have $$0 = \partial_j\left(\sum_{\ell \geq i} c_\ell \omega_\ell\right) = - (s_j c_j)(x_j + x_{j+1})\omega_{j+1} + \sum_{\ell\geq i} (\partial_j(c_j)) \omega_j.$$ This implies that $$s_i(c_j)(x_j+x_{j+1})\omega_{j+1} = \partial_j(c_{j+1})\omega_{j+1},$$ and hence $\partial_j(c_{j+1}) = c_j(x_j +x_{j+1})$, since $s_i(\partial_j) = \partial_j$ for all $j$. Thus \begin{equation}\label{cjfromcn}\partial_j \partial_{j+1} \ldots \partial_{n-1} (c_n) = (x_{n-1}+x_n)(x_{n-2}+x_{n-1})\ldots(x_j+x_{j+1})c_j.\end{equation} Now, using the reduced expression $$\partial_{w_0} = (\partial_1 \partial_2\ldots\partial_{n-1} \partial_n \partial_{n-1}) \ldots \partial_1)(\partial_2\ldots \partial_n \partial_{n-1} \ldots \partial_2)\ldots \partial_n$$ we have $$\Sch_{0,i} = \sum_{k=i}^{n-1}c_k\omega_k +\ (-1)^{n-i}(\partial_1 \partial_2 \ldots \partial_{n-1} \partial_n \partial_{n-1} \ldots \partial_1)\ldots  \partial_n(\underline{x}^{\delta})) s_{n-1} \ldots s_i(\omega_{n}).$$ Note that this last coeffcient is just the usual (non-extended) type B Schur polynomial associated to the partition $(i)$. Hence we have $c_n = (-1)^{n-i} {}^{\mathfrak{b}}h_{n-i}(1,n-i).$ Now, using \eqref{cjfromcn} and \eqref{homgenpolys} gives $$c_j = (-1)^{n-i} h_{j-i}(x_j^2,\ldots, x_n^2)$$ as desired.  \end{proof}

This immediately implies a multiplication formula following the methods in \cite[Proposition 2.14]{NV2}.

\begin{lemma} For $\beta,\beta'$ strict partitions as above, we have $$\mathcal{S}_{0,\beta} \mathcal{S}_{0,\beta'} = (-1)^{\epsilon_{\beta,\beta'}}\mathcal{S}_{0,\beta \beta'},$$ where $\beta \beta'$ is the unique strict partition that can be formed from the set $\beta \sqcup \beta'$ and $\epsilon_{\beta, \beta'}$ is the length of the minimal permutation taking $\beta \sqcup \beta'$ to $\beta \beta'.$ \end{lemma}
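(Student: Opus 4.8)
The plan is to deduce the statement from the single-part formula of the preceding Lemma together with the exterior-algebra structure carried by $\mathcal{S}_{0,1},\dots,\mathcal{S}_{0,n}$ inside ${}^{\mathfrak{b}}\Lambda_n^{ext}$. Two structural facts come first. Since $\mathsf{P}_n^{ext}$ is a supercommutative $\QQ[\mathbf{x}]$-algebra with the $\omega_i$ odd, any two odd elements anticommute and every odd element squares to zero; in particular the odd, $\omega$-degree-one elements $\mathcal{S}_{0,i}$ satisfy $\mathcal{S}_{0,i}\mathcal{S}_{0,j}=-\mathcal{S}_{0,j}\mathcal{S}_{0,i}$ and $\mathcal{S}_{0,i}^2=0$. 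And by the preceding Lemma each $\mathcal{S}_{0,i}$ is $\omega_i$ plus a $\QQ[\mathbf{x}]$-combination of $\omega_\ell$ with $\ell>i$, so the change of generators from $(\omega_1,\dots,\omega_n)$ to $(\mathcal{S}_{0,1},\dots,\mathcal{S}_{0,n})$ is unitriangular over $\QQ[\mathbf{x}]$, hence invertible. It follows that $\mathsf{P}_n^{ext}=\QQ[\mathbf{x}]\otimes\bigwedge[\mathcal{S}_{0,1},\dots,\mathcal{S}_{0,n}]$, that the wedges $\mathcal{S}_{0,\gamma_1}\cdots\mathcal{S}_{0,\gamma_k}$ (over strict partitions $\gamma=(\gamma_1>\dots>\gamma_k)$ with parts $\le n$) form a $\QQ[\mathbf{x}]$-basis, and that products of such wedges obey the usual exterior sign rule: the product over $\beta$ followed by the product over $\beta'$ vanishes when $\beta$ and $\beta'$ share a part, and otherwise equals $(-1)^{\epsilon_{\beta,\beta'}}$ times the product over $\beta\beta'$, where $\epsilon_{\beta,\beta'}$ is exactly the number of transpositions sorting the concatenated index sequence into the decreasing order $\beta\beta'$.

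The crux is the identification $\mathcal{S}_{0,\gamma}=\partial_{w_0}(\underline{x}^{\delta}\omega_\gamma)=\mathcal{S}_{0,\gamma_1}\mathcal{S}_{0,\gamma_2}\cdots\mathcal{S}_{0,\gamma_k}$ for every strict partition $\gamma$. I would prove this by running the argument of the preceding Lemma with $\omega_\gamma$ in place of a single $\omega_i$: writing $\mathcal{S}_{0,\gamma}=\sum_S c_S\,\omega_S$ over $\QQ[\mathbf{x}]$, the symmetry conditions $\partial_j(\mathcal{S}_{0,\gamma})=0$ give recursions of the form $\partial_j(c_{S'})=\pm(x_j+x_{j+1})\,c_S$ relating the coefficient of a monomial $\omega_S$ to that of the monomial $\omega_{S'}$ obtained from $\omega_S$ by raising one index $j$ to $j+1$; the extremal coefficients are ordinary (non-extended) type B Schur polynomials, just as in the preceding Lemma, and solving the recursions downward recovers $\mathcal{S}_{0,\gamma}$ as the claimed product. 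The observation that makes this work — already implicit above — is that neither the Demazure operators $\partial_i$ nor the reflections $s_i$ can lower an $\omega$-index; hence $\mathcal{S}_{0,\gamma}$ is supported on those $\omega_S$ whose index set, written decreasingly, dominates $\gamma$ coordinatewise, its $\omega_\gamma$-coefficient (the unique one of polynomial degree zero) is the empty type B Schur polynomial $\partial_{w_0}(\underline{x}^{\delta})=1$, and this matches the leading term of $\mathcal{S}_{0,\gamma_1}\cdots\mathcal{S}_{0,\gamma_k}$, which is again $\omega_\gamma$ with coefficient $1$.

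Granting the identification, the lemma is immediate. If $\beta$ and $\beta'$ share a part $j$, rewriting $\mathcal{S}_{0,\beta}\mathcal{S}_{0,\beta'}$ as a product of single-part polynomials in which $\mathcal{S}_{0,j}$ appears twice and anticommuting the two copies together produces a factor $\mathcal{S}_{0,j}^2=0$, so the product vanishes; correspondingly $\beta\sqcup\beta'$ is not strict and $\mathcal{S}_{0,\beta\beta'}$ is $0$ by convention. If $\beta$ and $\beta'$ are disjoint, then $\mathcal{S}_{0,\beta}\mathcal{S}_{0,\beta'}$ equals $\big(\mathcal{S}_{0,\beta_1}\cdots\mathcal{S}_{0,\beta_{\ell(\beta)}}\big)\big(\mathcal{S}_{0,\beta'_1}\cdots\mathcal{S}_{0,\beta'_{\ell(\beta')}}\big)$ by the identification, this equals $(-1)^{\epsilon_{\beta,\beta'}}$ times the wedge over the sorted index sequence $\beta\beta'$ by the exterior sign rule, and that wedge is $\mathcal{S}_{0,\beta\beta'}$ by the identification again.

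I expect the one genuinely substantial point to be the identification $\mathcal{S}_{0,\gamma}=\mathcal{S}_{0,\gamma_1}\cdots\mathcal{S}_{0,\gamma_k}$: the preceding Lemma concerns a single $\omega$, and carrying its coefficient-recursion through for a product of $\omega$'s — including the verification that the extremal coefficients are the expected type B Schur polynomials and the consistent bookkeeping of the anticommutation signs — is where the work lies; the remainder is formal.
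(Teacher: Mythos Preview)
Your route differs from the paper's and has a gap at precisely the step you flag as the substantial one. You propose to establish $\mathcal{S}_{0,\gamma}=\mathcal{S}_{0,\gamma_1}\cdots\mathcal{S}_{0,\gamma_k}$ by running the coefficient recursion of the preceding lemma and matching initial data. But that recursion determines \emph{lower}-index coefficients from \emph{higher}-index ones, not conversely: already for a single odd generator in rank two, a symmetric element $c_1\omega_1+c_2\omega_2$ has $c_1$ determined by $c_2$ via $c_1(x_1+x_2)=\partial_1(c_2)$, while $c_2$ is free up to an arbitrary symmetric summand. Hence matching the $\omega_\gamma$-coefficient (which is indeed $1$ on both sides, as you observe) does not force equality. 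What would force it is matching the \emph{top} coefficient, on $\omega_{n-k+1}\cdots\omega_n$; but for the product $\mathcal{S}_{0,\gamma_1}\cdots\mathcal{S}_{0,\gamma_k}$ that top coefficient is a $k!$-term signed sum of products of the ${}^{\mathfrak{b}}h$-polynomials from the preceding lemma, and checking that it agrees with the type B Schur polynomial arising as the top coefficient of $\mathcal{S}_{0,\gamma}$ is a nontrivial identity you do not address.

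The paper avoids this computation by a different mechanism. Because $\mathcal{S}_{0,j}\in\bigcap_i\ker\partial_i$, it can be pulled through $\partial_{w_0}$: one has $\mathcal{S}_{0,i}\,\mathcal{S}_{0,j}=\partial_{w_0}(\underline{x}^{\delta}\omega_i)\cdot\mathcal{S}_{0,j}=\partial_{w_0}\bigl(\underline{x}^{\delta}\omega_i\,\mathcal{S}_{0,j}\bigr)$. Expanding $\mathcal{S}_{0,j}$ via the preceding lemma and arguing that only the $k=j$ summand survives under $\partial_{w_0}$ yields $\mathcal{S}_{0,i}\,\mathcal{S}_{0,j}=\partial_{w_0}(\underline{x}^{\delta}\omega_i\omega_j)=(-1)^{\epsilon_{i,j}}\mathcal{S}_{0,ij}$ directly; the general statement then follows by induction on the length of $\beta'$. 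The factorisation $\mathcal{S}_{0,\gamma}=\mathcal{S}_{0,\gamma_1}\cdots\mathcal{S}_{0,\gamma_k}$ that you set out to prove first is thus a \emph{consequence} of the lemma rather than an input to it.
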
\begin{proof} First let $\beta=(i)$ and $\beta'=(j)$, with $i\not=j$. Then we have $\partial_{w_0}(\underline{x}^\delta\ {}^{\mathfrak{b}}h_{k-j}(1,k))=0$ unless $k=j$ and $\partial_{w_0}(\omega_i \omega_j)=0$ (since $\partial_{w_0}$ will eventually raise the smaller index to equal the larger one in each term). Thus \begin{align*} \Sch_{0,i}\Sch_{0,j} &= \partial_{w_0}(\underline{x}^\delta \omega_i \Sch_{0,j})\\&= \sum_{k\geq j} (-1)^{n-j} \partial_{w_0}(\underline{x}^\delta\ {}^{\mathfrak{b}}h_{k-j}(1,k) \omega_i \omega_j)\\&= \partial_{w_0}(\underline{x}^\delta \omega_i \omega_j) \\ &= (-1)^{\epsilon_{i,j}} \Sch_{0,ij}.\end{align*} Applying this reasoning to general strict partitions $\beta$ gives that $\mathcal{S}_{0,\beta} \mathcal{S}_{0,\ell} = (-1)^{\epsilon_{\beta,\ell}}(\mathcal{S}_{0,\beta\ell}).$ The result then follows from induction on the length of $\beta'.$ \end{proof}

We also have a basis for the extended symmetric polynomials. 

\begin{lemma}\label{basis} The abelian group $({}^{\mathfrak{b}}\Lambda_n^{ext})_{2k}$ consisting of all elements with $k$ total nonzero $\omega$'s has a ${}^{\mathfrak{b}}\Lambda_n$ basis given by $\{\mathcal{S}_{0,\nu}\}$, where $\nu$ ranges over strict partitions with $k$ parts. \end{lemma}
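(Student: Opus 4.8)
The plan is to establish both halves of the statement---spanning and ${}^{\mathfrak{b}}\Lambda_n$-linear independence of $\{\Sch_{0,\nu}\}$ inside $({}^{\mathfrak{b}}\Lambda_n^{ext})_{2k}$---by triangularity with respect to a partial order on index sets. Preliminarily, I would note that each $\partial_j$ preserves the number of $\omega$'s occurring (immediate from the formulas for $\partial_j(\omega_i)$ and the Leibniz rule), so $({}^{\mathfrak{b}}\Lambda_n^{ext})_{2k}$ is a well-defined ${}^{\mathfrak{b}}\Lambda_n$-submodule of the free $\QQ[\x]$-module on the $\omega_\beta$ with $\beta$ a $k$-element subset of $\{1,\dots,n\}$; on such $\beta$, written in increasing order, put the componentwise partial order $\preceq$, with least element $(1,2,\dots,k)$. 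The key input is that each extended Schur polynomial is lower-triangular with unit leading term: $\Sch_{0,\nu}=\pm\,\omega_\nu+\sum_{\beta\succ\nu}c_\beta\,\omega_\beta$ for some $c_\beta\in\QQ[\x]$. This follows from the multiplication Lemma, which gives $\Sch_{0,\nu}=\pm\,\Sch_{0,(\nu_1)}\cdots\Sch_{0,(\nu_k)}$ when $\nu$ has parts $\nu_1>\dots>\nu_k$, together with the preceding Lemma, by which each $\Sch_{0,(\nu_j)}$ equals $(-1)^{n-\nu_j}\omega_{\nu_j}$ plus a $\QQ[\x]$-combination of $\omega_\ell$ with $\ell>\nu_j$; expanding the product, $\omega_\nu$ is produced only by selecting the lowest-index term from every factor, and every other surviving monomial has index set strictly above $\nu$ in $\preceq$.

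Granting this, linear independence is formal: in a relation $\sum_\nu g_\nu\Sch_{0,\nu}=0$ with $g_\nu\in{}^{\mathfrak{b}}\Lambda_n$ not all zero, pick $\nu$ that is $\preceq$-minimal among those with $g_\nu\ne 0$; since $\omega_\nu$ appears in $\Sch_{0,\mu}$ only for $\mu\preceq\nu$, minimality forces the coefficient of $\omega_\nu$ in the relation to be $\pm g_\nu$, a contradiction.

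For spanning I would use the evident reduction. Given $f=\sum_\beta c_\beta\,\omega_\beta\in({}^{\mathfrak{b}}\Lambda_n^{ext})_{2k}$, take $\beta$ that is $\preceq$-minimal with $c_\beta\ne 0$; the claim is that $c_\beta\in{}^{\mathfrak{b}}\Lambda_n$. Computing the $\partial_j$-action on the exterior factor as in the proof of the Lemma on $\Sch_{0,i}$---namely $\partial_j(\omega_\beta)=0$ unless $j\in\beta$ and $j+1\notin\beta$, in which case $\partial_j(\omega_\beta)=\pm(x_j+x_{j+1})\,\omega_{\beta'}$ with $\beta'=(\beta\setminus\{j\})\cup\{j+1\}$---and reading the equation $\partial_j(f)=0$ coefficientwise yields $\partial_j(c_\beta)=0$ when $j\in\beta$ or $j,j+1\notin\beta$, and $\partial_j(c_\beta)=\pm(x_j+x_{j+1})\,s_j(c_{\beta''})$ with $\beta''=(\beta\setminus\{j+1\})\cup\{j\}$ when $j\notin\beta$, $j+1\in\beta$. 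Lowering $j+1$ to $j$ strictly decreases the index set in $\preceq$, so minimality of $\beta$ gives $c_{\beta''}=0$, and hence $\partial_j(c_\beta)=0$ for every $j$; since $c_\beta$ is a genuine polynomial, this puts it in $\bigcap_j\ker\partial_j\cap\QQ[\x]=\QQ[\x]^{W_{B_n}}={}^{\mathfrak{b}}\Lambda_n$. Subtracting $\pm c_\beta\,\Sch_{0,\beta}$ then stays in $({}^{\mathfrak{b}}\Lambda_n^{ext})_{2k}$ (a ${}^{\mathfrak{b}}\Lambda_n$-module), kills the $\omega_\beta$-term, and by triangularity introduces no $\omega_\gamma$ with $\gamma\prec\beta$; iterating terminates and writes $f$ on the basis $\{\Sch_{0,\nu}\}$.

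The main obstacle is the bookkeeping in the spanning step: pinning down the $\partial_j$-action on the exterior variables precisely enough to extract the coefficientwise form of membership in $\ker\partial_j$, and verifying that the ``error'' index set $\beta''$ obtained by replacing $j+1$ with $j$ really is strictly $\preceq$-below $\beta$, so that minimality of $\beta$ applies. Once the triangularity of the $\Sch_{0,\nu}$ is in hand, both linear independence and the termination of the reduction are routine.
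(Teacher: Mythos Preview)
Your argument is correct and is precisely the ``formal argument'' the paper defers to \cite[Proposition~2.15]{NV2} rather than writing out: triangularity of $\Sch_{0,\nu}$ in the $\omega_\beta$-basis (leading term $\pm\omega_\nu$, via the multiplication lemma and the explicit shape of $\Sch_{0,i}$), followed by the minimal--support reduction showing that the lowest coefficient of any extended symmetric element lies in ${}^{\mathfrak{b}}\Lambda_n$. One small point worth making explicit for completeness: your coefficientwise analysis implicitly splits into the cases $j<n$ (where the $j\leftrightarrow j{+}1$ bookkeeping applies) and $j=n$ (where $\partial_n$ kills every $\omega_m$, so $\partial_n(c_\beta)=0$ is immediate); both are fine, but the latter deserves a sentence since the type~B operator behaves differently.
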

\begin{proof} This is a formal argument which carries over without modification from \cite[Proposition 2.15]{NV2}.\end{proof}

\subsection{Examples of Schur polynomials} 

We have the following low rank examples of the basis of Schur polynomials.\\

 \noindent $\underline{n=2}$: Let $\alpha= (0,0)$ and $\beta = \emptyset$. Then $\underline{x}^{\delta+\alpha} = x_1^3 x_2$, so we have \begin{align*} \mathcal{S}_{\alpha, \beta} &= \partial_1 \partial_2 \partial_1 \partial_2(x_1^3 x_2)= 1.\end{align*}

 Let $\beta =(1)$. Then we have \begin{align*} \mathcal{S}_{\alpha,\beta} &= \partial_2\partial_1\partial_2\partial_1(x_1^3x_2 \omega_1) = \omega_1 + x_1^2 \omega_2.\end{align*} Note in particular that this is one of the basis elements described in the previous section. 

 Let $\beta=(2)$. Then we have \begin{align*}  \mathcal{S}_{\alpha,\beta} = \partial_1 \partial_2 \partial_1 \partial_2( x_1^3 x_2 \omega_2) =  \omega_2. \end{align*} This is another of the basis elements described above.

 Finally, let $\beta=(1,2)$. Then we have \begin{align*} \mathcal{S}_{\alpha,\beta} = \partial_1 \partial_2 \partial_1 \partial_2( x_1^3 x_2 \omega_1\omega_2)= \omega_1 \omega_2. \end{align*} This completes the basis of ${}^{\mathfrak{b}}\Lambda_2^{ext}$ described in the previous section.

Note also that we have (cf. Lemma 4) $$\mathcal{S}_{0,(1)} \mathcal{S}_{0,(2)} = \mathcal{S}_{0,(1,2)},$$ $$\mathcal{S}_{0,(1)} \mathcal{S}_{0,(1,2)} = 0,$$ and $$\mathcal{S}_{0,(2)} \mathcal{S}_{0,(1,2)} = 0.$$ 

\noindent $\underline{n=3}$: Here we abbreviate the lengthy computations.
\begin{enumerate} \item $\mathcal{S}_{(0,0,0),\emptyset} = 1.$ \item $\mathcal{S}_{(0,0,0),(1)} = \omega_1 +x_1^2 \omega_2 + x_1^2x_2^2\omega_3.$ \end{enumerate}

In particular, we obtain a basis of the extended symmetric polynomials in this case as well.

\subsection{Extended nilHecke algebra as a matrix algebra}

Define, for $w\in W_{B_n}$, the type B Schubert polynomial $$\schub_w = \partial_{w^{-1}w_0}(\underline{x}^\delta),$$ with $w_0\in W_{B_n}$ and $\delta$ as above. The set $\{\schub_w\}_{w\in W_{B_n}}$ forms a basis for $\mathbb{Q}[x_1,\ldots,x_n]$ as a left $\mathbb{Q}[x_1,\ldots,x_n]^{W_{B_n}}$ module. Note that $\deg(\schub_w) = \ell(w)$ (cf \cite{FK}).

\begin{proposition} The set $\{\schub_w\}_{w\in W_{B_n}}$ forms a basis for $\extpoly$ as a free left module over $\extpoly^{B_n}$, the invariants of $\extpoly$ under the action of $W_{B_{n}}$. \end{proposition}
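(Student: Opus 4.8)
The plan is to bootstrap from the known non-extended statement — that $\{\schub_w\}_{w\in W_{B_n}}$ is a basis for $\QQ[\x]$ as a free left module over ${}^{\mathfrak{b}}\Lambda_n = \QQ[\x]^{W_{B_n}}$ — by tensoring up to the exterior part and then organizing the extended symmetric polynomials correctly. First I would note that as a $\QQ$-vector space (indeed as a bimodule over the scalars) we have $\PP_n^{ext} = \QQ[\x]\otimes\bigwedge[\omega_1,\dots,\omega_n]$, and that the action of each $\partial_i$ on a monomial $f\,\omega_\beta$ is governed by the twisted Leibniz rule together with the formulas $\partial_i(\omega_j)$ computed in Section 2.1. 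The key structural input is Lemma \ref{basis}: the graded piece $({}^{\mathfrak{b}}\Lambda_n^{ext})_{2k}$ is a free ${}^{\mathfrak{b}}\Lambda_n$-module with basis $\{\Sch_{0,\nu}\}$ as $\nu$ runs over strict partitions of length $k$ with parts $\leq n$. So ${}^{\mathfrak{b}}\Lambda_n^{ext}$ is a free ${}^{\mathfrak{b}}\Lambda_n$-module with basis $\{\Sch_{0,\nu}\}_\nu$, one element for each subset of $\{1,\dots,n\}$, hence of rank $2^n$.

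Next I would set up a filtration of $\PP_n^{ext}$ by the total $\omega$-degree: let $F^{\geq k}$ be the span of all $f\,\omega_\beta$ with $|\beta|\geq k$. Each $\partial_i$ preserves this filtration, since $\partial_i(\omega_j)$ is again a single $\omega$ (times a polynomial) — the $\omega$-degree never drops. Hence the filtration descends to $\extpoly^{B_n} = {}^{\mathfrak{b}}\Lambda_n^{ext}$ as well. The plan is then to prove freeness on each associated graded piece and lift. On $\mathrm{gr}^k \PP_n^{ext} = \bigoplus_{|\beta|=k}\QQ[\x]\,\omega_\beta$, the induced operators $\bar\partial_i$ act by the ordinary (polynomial-coefficient) type B Demazure operators in the $\QQ[\x]$-slot together with a permutation/shift on the $\omega_\beta$ indices; in particular the leading term of $\Sch_{0,\nu}$ in $\mathrm{gr}$ is a non-extended type B Schur polynomial times $\omega_\nu$. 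Combining with the classical fact that $\{\schub_w\}$ is a ${}^{\mathfrak{b}}\Lambda_n$-basis of $\QQ[\x]$, one gets that $\{\schub_w\,\omega_\beta\}_{w,\beta}$ is a basis of $\mathrm{gr}\,\PP_n^{ext}$ over $\mathrm{gr}\,{}^{\mathfrak{b}}\Lambda_n^{ext}$.

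Then I would run a standard filtered-module argument: a set of elements of a filtered module over a filtered ring whose images form a basis of the associated graded (with the module and ring filtrations exhaustive and bounded below, which holds here degree-by-degree since everything is finitely generated in each $\ZZ$-degree) is itself a basis. This upgrades $\{\schub_w\,\omega_\beta\}$ to a genuine basis of $\PP_n^{ext}$ over ${}^{\mathfrak{b}}\Lambda_n^{ext}$. But $\{\omega_\beta\}_\beta$ is exactly a ${}^{\mathfrak{b}}\Lambda_n$-generating reindexing of the ${}^{\mathfrak{b}}\Lambda_n^{ext}$-module structure: writing ${}^{\mathfrak{b}}\Lambda_n^{ext} = \bigoplus_\nu {}^{\mathfrak{b}}\Lambda_n\,\Sch_{0,\nu}$ and $\PP_n^{ext} = \bigoplus_w \QQ[\x]\,\schub_w$, I would compare ranks and degrees: both sides, as ${}^{\mathfrak{b}}\Lambda_n$-modules, are free, $\PP_n^{ext}$ of rank $2^n|W_{B_n}|$, and $\bigoplus_w {}^{\mathfrak{b}}\Lambda_n^{ext}\schub_w$ of rank $2^n|W_{B_n}|$ with matching graded dimensions. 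Since the inclusion $\bigoplus_w {}^{\mathfrak{b}}\Lambda_n^{ext}\,\schub_w \hookrightarrow \PP_n^{ext}$ is ${}^{\mathfrak{b}}\Lambda_n$-linear, injective (it is the classical basis statement tensored with the injection ${}^{\mathfrak{b}}\Lambda_n^{ext}\hookrightarrow\PP_n^{ext}$ on the $\omega$-side, once one checks the $\Sch_{0,\nu}$ are $\QQ[\x]$-linearly independent, which follows from their distinct leading $\omega$-terms), and an equality of graded dimensions, it is an isomorphism.

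The main obstacle I expect is making the filtered-module lifting rigorous while the $\omega$-degree is \emph{negative} and unbounded below in a naive sense: one must instead filter by the number of $\omega$-factors $|\beta|\in\{0,1,\dots,n\}$, which \emph{is} bounded, and separately observe that within each fixed $\omega$-degree the ordinary $\ZZ$-grading is bounded below, so that the two-step (first by $|\beta|$, then by internal degree) argument goes through. A secondary technical point is verifying that the $\bar\partial_i$ on $\mathrm{gr}^k$ really are the classical polynomial Demazure operators up to an invertible relabeling of the $\omega_\beta$, so that the classical Schubert-basis theorem of Fomin--Kirillov (cf.\ \cite{FK}) applies verbatim; this is a direct check from the $\partial_i(\omega_j)$ formulas but should be spelled out.
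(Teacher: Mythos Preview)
Your strategy and the paper's are built on the same core observation: the triangularity of $\Sch_{0,\nu}$ with respect to the monomials $\omega_\nu$ (leading term $\omega_\nu$, tail in strictly ``higher'' $\omega$-monomials), combined with the classical fact that $\{\schub_w\}$ is a ${}^{\mathfrak{b}}\Lambda_n$-basis of $\QQ[\x]$. The paper simply iterates: given $p(\x)\omega_\beta$, subtract $p(\x)\Sch_{0,\beta}$ to push the remainder to strictly higher $\omega$-monomials, and repeat; surjectivity of the multiplication map falls out in finitely many steps, and injectivity is asserted from the same triangularity. Your filtration by $|\beta|$ and graded-rank comparison is an abstract repackaging of exactly this iteration, so the two arguments are really the same proof in different clothing.

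One genuine slip to fix: the claim that $\{\schub_w\,\omega_\beta\}_{w,\beta}$ is a basis of $\mathrm{gr}\,\PP_n^{ext}$ over $\mathrm{gr}\,{}^{\mathfrak{b}}\Lambda_n^{ext}$ is false as stated, since that set has $|W_{B_n}|\cdot 2^n$ elements while the rank you want is $|W_{B_n}|$. What is true (and what your subsequent rank-comparison argument actually uses) is that $\{\schub_w\,\omega_\beta\}_{w,\beta}$ is a basis of $\mathrm{gr}\,\PP_n^{ext}$ over ${}^{\mathfrak{b}}\Lambda_n$, and separately $\{\omega_\nu\}_\nu$ is a basis of $\mathrm{gr}\,{}^{\mathfrak{b}}\Lambda_n^{ext}$ over ${}^{\mathfrak{b}}\Lambda_n$; combining these gives that $\{\schub_w\}_w$ is a basis of $\mathrm{gr}\,\PP_n^{ext}$ over $\mathrm{gr}\,{}^{\mathfrak{b}}\Lambda_n^{ext}$, which then lifts. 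Once you correct that line, your argument goes through and is equivalent to the paper's.
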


\begin{proof} We show that the multiplication map $$\extpoly^{B_n} \otimes \operatorname{span}_{\mathbb{Q}}\{\schub_w\}_{w\in W_{B_n}}\longrightarrow \extpoly$$ is an isomorphism. It is clearly injective. To show that it is surjective, we must write any polynomial $f\in\extpoly$ as a sum $f= \sum f_i b_i$, where $b_i \in \operatorname{span}_{\mathbb{Q}}\{\schub_w\}_{w\in W_{B_n}}$. 

Note that it suffices to focus on $f$ which contain a nonzero number of $\omega_i$'s, since the analogous result holds for usual polynomials. To see this, note that any such $f = p(\underline{x})\omega_\beta$ can be written as a linear combination of Schur polynomials $\mathcal{S}_{0,\beta}$. Indeed, since the term of $\mathcal{S}_{0,\beta}$ which contains the highest degree $\omega_i$ (with respect to the grading in Section 2.1, with $\deg(\omega)=-2i$) is $\omega_\beta$, we have $$p(\underline{x})\omega_\beta - p(\underline{x})\mathcal{S}_{0,\beta} = \sum_{\nu >
\beta} c_\nu \omega_\nu.$$ Repeatedly applying this procedure to the remaining $c_\nu \omega_\nu$ terms gives the desired decomposition. \end{proof}

Note that the graded rank of $\extpoly$ as a left $\extpoly^{B_n}$-module is thus $\sum_{w\in W_{B_n}} q^{\operatorname{deg} \schub_w} = \sum_{w\in W_{B_n}} q^{\ell(w)}$. Set $P_{B_n}(q) = \prod_{i=1}^n (1 + q + q^2 + \ldots + q^{2k})$; then, by \cite[Theorem 1.1]{Rei}, $\sum_{w\in W_{B_n}} q^{\ell(w)} = P_{B_{n}}(q).$ We therefore have the following isomorphism.

\begin{proposition} There is a $\ZZ$-algebra isomorphism $$\nhb \xrightarrow{\sim}\End_{{}^{\mathfrak{b}}\Lambda_n^{ext}}(\mathsf{P}_n^{ext})\cong \operatorname{Mat}_{P_{B_n}(q)} ({}^{\mathfrak{b}}\Lambda_n^{ext}).$$ \end{proposition}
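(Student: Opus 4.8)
The plan is to identify $\nhb$ with the endomorphism ring of its polynomial representation over the invariants, and then pin down surjectivity by a graded dimension count. First I would observe that the left action of $\nhb$ on $\mathsf{P}_n^{ext}$ commutes with the right multiplication action of ${}^{\mathfrak{b}}\Lambda_n^{ext}$: multiplication operators commute with right multiplication, and for $g \in {}^{\mathfrak{b}}\Lambda_n^{ext} = \bigcap_i \ker \partial_i$ the Leibniz rule gives $\partial_i(fg) = \partial_i(f)g + s_i(f)\partial_i(g) = (\partial_i f)g$ for every $f$. Thus $\mathsf{P}_n^{ext}$ is a $(\nhb, {}^{\mathfrak{b}}\Lambda_n^{ext})$-bimodule, and this yields a homomorphism of $\ZZ$-graded algebras $\rho \colon \nhb \to \End_{{}^{\mathfrak{b}}\Lambda_n^{ext}}(\mathsf{P}_n^{ext})$; it is degree-preserving because the two $\ZZ$-gradings agree on generators. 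The map $\rho$ is injective --- this is exactly the faithfulness of the polynomial representation, already used in the proof of Theorem~\ref{pbwbasis}.

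Next, by the preceding Proposition, $\mathsf{P}_n^{ext}$ is a free module over ${}^{\mathfrak{b}}\Lambda_n^{ext}$ on the homogeneous basis $\{\schub_w\}_{w \in W_{B_n}}$ with $\deg \schub_w = \ell(w)$, hence of graded rank $\sum_{w \in W_{B_n}} q^{\ell(w)} = P_{B_n}(q)$. Therefore $\End_{{}^{\mathfrak{b}}\Lambda_n^{ext}}(\mathsf{P}_n^{ext}) \cong \operatorname{Mat}_{P_{B_n}(q)}({}^{\mathfrak{b}}\Lambda_n^{ext})$, which establishes the second isomorphism in the statement; moreover this matrix ring has graded $\QQ$-dimension $P_{B_n}(q)\, P_{B_n}(q^{-1})\, \dim_q {}^{\mathfrak{b}}\Lambda_n^{ext}$.

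It remains to show $\rho$ is surjective, which I would deduce from equality of graded dimensions together with the fact that an injective degree-preserving map of $\ZZ$-graded vector spaces with finite-dimensional, equal graded components is bijective. (Each component of $\nhb$ is finite dimensional: in the PBW basis of Theorem~\ref{pbwbasis}, fixing the total degree forces the polynomial degree to be nonnegative, leaving only finitely many choices of $\omega$-degree and of $w \in W_{B_n}$.) From that PBW basis, $\dim_q \nhb = \dim_q \QQ[\x] \cdot \dim_q \bigwedge[\omega_1, \dots, \omega_n] \cdot \sum_{w} q^{-\ell(w)}$. By Lemma~\ref{basis}, ${}^{\mathfrak{b}}\Lambda_n^{ext}$ is free over ${}^{\mathfrak{b}}\Lambda_n$ on the $\mathcal{S}_{0,\nu}$ for $\nu$ a strict partition with parts in $\{1, \dots, n\}$, and since $\deg \mathcal{S}_{0,\nu} = -2|\nu| = \deg \omega_\nu$ this gives $\dim_q {}^{\mathfrak{b}}\Lambda_n^{ext} = \dim_q {}^{\mathfrak{b}}\Lambda_n \cdot \dim_q \bigwedge[\omega_1, \dots, \omega_n]$. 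Finally, $\QQ[\x]$ is free over ${}^{\mathfrak{b}}\Lambda_n$ of graded rank $P_{B_n}(q)$ (the fact recalled just before the Proposition), so $\dim_q {}^{\mathfrak{b}}\Lambda_n = \dim_q \QQ[\x] / P_{B_n}(q)$. Substituting, and using $\sum_w q^{-\ell(w)} = P_{B_n}(q^{-1})$, both $\dim_q \nhb$ and $\dim_q \operatorname{Mat}_{P_{B_n}(q)}({}^{\mathfrak{b}}\Lambda_n^{ext})$ reduce to $P_{B_n}(q^{-1})\, \dim_q \QQ[\x]\, \dim_q \bigwedge[\omega_1, \dots, \omega_n]$, so $\rho$ is an isomorphism.

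Since the genuinely substantive ingredients --- faithfulness of the polynomial action, the PBW basis, and the freeness of $\mathsf{P}_n^{ext}$ over its invariants --- are already available, the argument is essentially bookkeeping, and I expect the only delicate point to be conventions. Specifically, ${}^{\mathfrak{b}}\Lambda_n^{ext}$ is noncommutative as an ungraded ring (its odd elements $\mathcal{S}_{0,i}$ anticommute by the multiplication formula above), so in the matrix description one must take $\End$ over the \emph{right} ${}^{\mathfrak{b}}\Lambda_n^{ext}$-action and use the sign isomorphism ${}^{\mathfrak{b}}\Lambda_n^{ext} \cong ({}^{\mathfrak{b}}\Lambda_n^{ext})^{\mathrm{op}}$ of a supercommutative ring; with that understood, freeness as a left module (from the preceding Proposition) transfers to freeness as a right module of the same graded rank, and the matrix identification goes through.
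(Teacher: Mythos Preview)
Your argument is correct and follows the same two-step skeleton as the paper's own proof: injectivity is the faithfulness of the polynomial representation, and surjectivity is a graded-rank comparison using the freeness of $\mathsf{P}_n^{ext}$ over ${}^{\mathfrak{b}}\Lambda_n^{ext}$. The paper states this in two sentences without carrying out the rank computation, whereas you make it explicit (and also spell out why the action is ${}^{\mathfrak{b}}\Lambda_n^{ext}$-linear and flag the super-commutativity convention needed for the matrix identification), so there is no genuinely different idea here---just the details the paper suppresses.
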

\begin{proof} That this map is injective follows from the fact that $\nhb$ acts on $\mathsf{P}_n^{ext}$ via linearly independent operators. The surjectivity of the map follows from the freeness of $P_n^{ext}$ as a $\Lambda_n^{ext}$-module and a comparison of graded ranks. \end{proof}

\section{Extended Solomon's theorem for type B}
Fix an integer $n\geq 1$, and let $\mathbf{x} = \{x_1,x_2,\ldots, x_n\}$ and $\mathbf{dx} = \{dx_1, dx_2, \ldots dx_n\}$ be sets of formal even and odd variables, respectively. We use the following shorthand for the superpolynomials in $\mathbf{x}$ and $\mathbf{dx}$: $$\superpoly := \QQ[x_1,\ldots,x_n] \otimes \bigwedge[dx_1,\ldots dx_n].$$ This ring is bigraded with $\deg(x_i)=(1,0)$ and $\deg(dx_i)=(0,1)$. Note that there is an action of $W_{B_n}$ on $\bigwedge[dx_1, \ldots, dx_n]$ given be $s_i(dx_j) = dx_{s(j)}$ for $1\leq i \leq n-1$, $s_n(dx_j) = dx_j$ for $1\leq j \leq n-1$, and $s_n(dx_n) = -dx_n$.

 Solomon's theorem gives the following description of the $W$-invariants of this ring for any Weyl group $W$. 

\begin{theorem} \cite{Sol} For any family $\mathbf{f}=\{f_1,\ldots,f_n\}$ of algebraically independent generators of $\QQ[\mathbf{x}]^{W}$, $$\superpoly^W = \QQ[\mathbf{f},\mathbf{df}],$$ where for $f\in \QQ[\mathbf{x}]$, $$df = \sum_{i=1}^n \frac{\partial f}{\partial x_i} dx_i.$$ \end{theorem}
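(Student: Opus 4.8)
The inclusion $\QQ[\mathbf{f},\mathbf{df}]\subseteq\superpoly^W$ is the easy half: each $f_i$ is $W$-invariant by hypothesis, and since the exterior derivative is functorial it commutes with the $W$-action on $\superpoly$ (the action on the $dx_j$ matching that on the $x_j$), so each $df_i$ is $W$-invariant as well; hence the subalgebra generated by $\mathbf{f}$ and $\mathbf{df}$ lies in $\superpoly^W$. The substance is the reverse inclusion, and the plan is the classical ``spread out over the fraction field, then descend by normality'' argument.

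First I would pass to $\QQ(\mathbf{x})$. Algebraic independence of $\mathbf{f}$ makes the Jacobian $M=(\partial f_j/\partial x_i)$ invertible over $\QQ(\mathbf{x})$, so $\{df_1,\dots,df_n\}$ is a $\QQ(\mathbf{x})$-basis of $\bigoplus_i\QQ(\mathbf{x})\,dx_i$ and the $2^n$ wedge products $df_J$ ($J\subseteq\{1,\dots,n\}$) form a $\QQ(\mathbf{x})$-basis of $\QQ(\mathbf{x})\otimes_{\QQ[\mathbf{x}]}\superpoly$. Given $\omega\in\superpoly^W$, write $\omega=\sum_J g_J\,df_J$ uniquely with $g_J\in\QQ(\mathbf{x})$; applying $w\in W$, using $w(df_J)=df_J$ and uniqueness, gives $w(g_J)=g_J$, so every $g_J\in\QQ(\mathbf{x})^W$. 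It remains to prove $g_J\in\QQ[\mathbf{x}]$. Cramer's rule (applied to the compound matrix of $M$) shows $g_J\det M\in\QQ[\mathbf{x}]$, so, $\QQ[\mathbf{x}]$ being factorial and hence normal, it is enough to check that $g_J$ has no pole along each prime divisor of $\det M$; by the classical identity $\det M=c\prod_\alpha\ell_{H_\alpha}$ with $c\neq0$ (product over the reflecting hyperplanes) these prime divisors are precisely the linear forms $\ell_{H_\alpha}$, and $\det M$ vanishes to order exactly one along each.

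The hard part will be this codimension-one analysis, so fix a reflecting hyperplane $H=H_\beta$ with reflection $s=s_\beta$. Choose linear coordinates $y_1,\dots,y_n$ with $s(y_1)=-y_1$ and $s(y_j)=y_j$ for $j\geq2$, so $H=\{y_1=0\}$, $s(dy_1)=-dy_1$ and $s(dy_j)=dy_j$. Writing $\omega=\sum_I h_I\,dy_I$, the $s$-invariance of $\omega$ forces $h_I$ to be even in $y_1$ when $1\notin I$ and odd in $y_1$ (in particular divisible by $y_1$) when $1\in I$; likewise each $f_j$ is $s$-invariant hence even in $y_1$, so $\partial f_j/\partial y_1$ is divisible by $y_1$ and the Jacobian in the $y$-coordinates (which differs from $M$ by an invertible scalar) can be written $\det M=y_1 E$ with $E$ even in $y_1$. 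Re-expanding $dy_I$ in the $df_J$-basis via Jacobi's identity for minors of $M^{-1}$ yields $g_J=N_J/\det M$ with $N_J\in\QQ[\mathbf{y}]$; since $g_J$ is $s$-invariant (even in $y_1$) while $\det M$ is odd in $y_1$, the polynomial $N_J$ must be odd in $y_1$, hence divisible by $y_1$. This $y_1$ cancels the order-one zero of $\det M$, and in the localization at $(y_1)$ the factor $E$ is a unit, so $g_J$ is regular along $H$. Running this over all reflections gives $g_J\in\QQ[\mathbf{x}]$, hence $g_J\in\QQ[\mathbf{x}]^W=\QQ[\mathbf{f}]$ and $\omega\in\QQ[\mathbf{f},\mathbf{df}]$.

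The one external input is the Jacobian identity quoted above; it follows from the standard fact that $\sum_i(d_i-1)$ equals the number of reflections (combined with the observation that $s_\alpha(\det M)=-\det M$, which already forces each $\ell_{H_\alpha}\mid\det M$) --- for $W_{B_n}$ the degrees are $d_i=2i$ and indeed $\sum_{i=1}^n(2i-1)=n^2=|\Phi^+(B_n)|$. A shorter-to-state alternative is a bigraded Hilbert series comparison: $\QQ[\mathbf{f},\mathbf{df}]$ is free over $\QQ[\mathbf{f}]$ on the $df_J$ because $df_1\wedge\dots\wedge df_n=\det M\cdot dx_1\wedge\dots\wedge dx_n\neq0$, so its series is $\prod_i(1+tq^{\,d_i-1})/(1-q^{d_i})$, and one checks this coincides with the bigraded Molien series of $\superpoly^W$; combined with the inclusion of the first paragraph, equality follows. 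This is cleaner to state but rests on the Shephard--Todd--Solomon exterior Molien identity, which is of essentially the same depth as the theorem, so I would present the normality argument as the main proof.
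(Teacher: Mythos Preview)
The paper does not prove this theorem; it is simply quoted with the citation \cite{Sol} to Solomon's original 1963 paper and then used as input for the type-$B$ extended analogue developed in Section~4. So there is no ``paper's own proof'' to compare against.

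That said, your sketch is a correct rendition of the classical argument: the easy inclusion, then expansion of an invariant form in the $df_J$-basis over $\QQ(\mathbf{x})$, $W$-invariance of the rational coefficients by uniqueness, and the codimension-one regularity check using the Jacobian identity $\det M = c\prod_\alpha \ell_{H_\alpha}$ together with the parity analysis in coordinates adapted to a single reflection. One tiny imprecision: the Jacobian in the $y$-coordinates differs from $M$ by an invertible \emph{constant matrix} (the linear change of basis), not an invertible scalar; only the determinants differ by a nonzero scalar, which is all you actually use. Your alternative bigraded-Hilbert-series route is also valid but, as you yourself note, is close to circular since the exterior Molien identity for a reflection group is essentially Solomon's theorem in generating-function form.
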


For $1\leq i \leq n-1$, let $\alpha_i =\frac{1}{x_i- x_{i+1}}$ and define $\mathbf{\alpha} = \{\alpha_1, \ldots,\alpha_{n-1}].$ In \cite{AEHL}, this theorem is enlarged to the extended polynomial ring $\extpoly$ by showing that there is an $\mathsf{NH}_n^{ext}$-equivariant isomorphism $\extpoly \rightarrow \QQ[\mathbf{x},\mathbf{dx}, \mathbf{\alpha}]$ which induces a canonical identification of $S_n$- invariants: $$\extpoly^{S_n} \xrightarrow{\sim} \QQ[\f,\df].$$ We aim to prove an analogous result for the type $B$ invariants.

It is first necessary to define an action of the type $B$ divided difference operators $\partial_i$ on $\superpoly.$ Define the denominator $\alpha_n = \frac{1}{2x_n}$ and set $\mathbf{\alpha} = \{\alpha_1,\ldots,\alpha_n\}$ and consider the algebra $\superpolydenoms.$ This algebra is bigraded, with $\operatorname{deg}(\alpha_i) = (-1,0)$. 

There is an action of the type $B_n$ divided difference operators on this algebra given by the usual action on polynomials and $$\partial_i(dx_i) = \frac{dx_i - s_i(dx_i)}{x_i - s_i(x_i)},$$ for $1 \leq i \leq n.$ Solomon's theorem implies that for any set $\mathbf{f} = \{f_1,\ldots, f_n\}$ of algebraically independent generators of $\QQ[\mathbf{x}]^{W_{B_n}}$, the subalgebra $\QQ[\mathbf{x},\mathbf{df}]$ is closed under the action of the divided difference operators.

We will construct a map $\superpoly \rightarrow \extpoly$ which will furnish an action of $\nhb$ on $\QQ[\mathbf{x},\mathbf{df}]$, and show that it is isomorphic as an $\nhb$-module to $\extpoly.$ 

As in \cite{AEHL}, call a tuple $\p\subset \QQ[\x]$ admissible if $p_j \in \QQ[\x]^{S_{n-1}}$, $s_3(p_j) = p_j$ 
, $\deg(p_j) = 2(n-j)$, and $\partial_{c[j]}(p_j)\in \QQ\setminus \{0\}$ for any $1\leq j \leq n$, where $$c[j] := s_{j+1} s_j s_{j+2} s_{j+1} \ldots s_n s_{n-1}$$ and $c[n]=1.$  We choose this last condition so that for an admissible tuple $\p$, the matrix $$\PP=(\partial_{c[j]}p_i)_{1\leq i,j \leq n}$$ is upper triangular and invertible, and so that $\PP$ contains the admissible tuple in its last column. 

\noindent {\bf Example 1.} Let $n=3$ and choose $\p = ((-1)^{3-i}h_{3-i}(x_3^2))_{i=1}^3.$ Certainly the $p_i$ have the correct degrees and are symmetric with respect to $S_2$ and $s_3$. Also, \begin{align*}\partial_{c[1]}(p_1) =\partial_2 \partial_1\partial_3\partial_2(x_3^4) = 1 \in \mathbb{Q}\setminus\{0\};\end{align*} \begin{align*} \partial_{c[2]}(p_2) = \partial_3\partial_2(-x_3^2) = 1\in \mathbb{Q}\setminus\{0\};\end{align*} and $\partial_{c[3]}(p_3) = 1.$ Hence $\p$ is admissible. The corresponding matrix is $$
\PP = \left(\begin{array}{ccc} 1 & -(x_2^2+x_3^2) & x_3^4 \\ 0 & 1 & -x_3^2 \\ 0&0&1 \end{array}\right).$$ Note in particular that $$\PP\bm{\omega}^T = \left(\begin{array}{c} \omega_1 - (x_2^2 + x_3^2)\omega_2 + x_3^4 \omega_3 \\ \omega_2- x_3^2\omega_3 \\ \omega_3\end{array} \right),$$  which are  degree 1 basis elements for $ {}^{\mathfrak{b}}\Lambda_3^{ext}$ as a ${}^{\mathfrak{b}}\Lambda_3$-module. 


For any $1\leq k \leq n-1$, define operators $\gamma_k, \rho_k: M_n(\extpoly) \rightarrow M_n(\extpoly)$ by setting $$\gamma_k(A)_{ij} = \delta_{j,k+1} A_{ik},$$ and $$\rho_k(A)_{ij} = \delta_{ik} A_{k+1, j}.$$ In other words, $\gamma_k$ returns the $k$th column of $A$ in the $k+1$st column, and $\rho_k$ returns the $k+1$ row in the $k$th row. We have the following characterization of admissible tuples.

\begin{lemma}\label{char1} \begin{enumerate} \item If $\p=\{p_1,\ldots, p_n\}$ is an admissible tuple, then $\PP$ satisfies $$\partial_{k+1}\partial_{k}(\PP) = \gamma_k(\PP)$$ for any $k=1,\ldots, n-1,$ and $\partial_n(\PP)=0$.
\item For any invertible $\mathsf{Q} = (q_{ij}) \in M_n(\QQ[x])$ such that $$\partial_{k+1}\partial_k(\mathsf{Q}) = \gamma_k(\mathsf{Q})$$ for $1\leq k \leq n-1$, $\partial_n(\QQ)=0$, and $\deg(q_{ij}) = 2(j-i)$, the tuple $\mathbf{q}=\{q_{1n},\ldots, q_{nn}\}$ is admissible and $\mathsf{Q}_{ij} = \partial_{c[j]} q_{in}$. 
\end{enumerate}\end{lemma}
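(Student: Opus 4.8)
The plan is to prove the two parts essentially in parallel, exploiting the definitions of admissibility and of the operators $\gamma_k$, $\rho_k$, $\partial_{c[j]}$ directly, with the combinatorics of the element $c[j]\in W_{B_n}$ doing most of the work. The key structural fact I would establish first is the recursion
\[
c[j] = s_{j+1}s_j\, c[j+1]\qquad (1\le j\le n-1),\qquad c[n]=1,
\]
which is immediate from the defining product, together with the observation that these expressions are reduced (so that $\partial_{c[j]} = \partial_{s_{j+1}s_j}\partial_{c[j+1]} = \partial_{j+1}\partial_j\partial_{c[j+1]}$). This is the engine that converts statements about $\partial_{k+1}\partial_k$ into statements relating column $k$ and column $k+1$ of $\mathsf{P}$.

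For part (1): write $\mathsf{P}_{ij} = \partial_{c[j]}(p_i)$. Applying the recursion with $k=j$ gives $\partial_{j+1}\partial_j(\mathsf{P}_{i,j+1}) = \partial_{j+1}\partial_j\partial_{c[j+1]}(p_i)$, and I need to see that this equals $\mathsf{P}_{ik}$ only when $k$ is in the right position — more precisely I want $\partial_{k+1}\partial_k(\mathsf{P})_{ij} = \delta_{j,k+1}\mathsf{P}_{ik}$. The entry is $\partial_{k+1}\partial_k\partial_{c[j]}(p_i)$. When $j = k+1$ this is $\partial_{k+1}\partial_k\partial_{c[k+1]}(p_i) = \partial_{c[k]}(p_i) = \mathsf{P}_{ik}$ by the recursion. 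When $j\le k-1$ or $j\ge k+2$ I must show the entry vanishes: the key point is that $c[j]$ for $j\ne k, k+1$ either already contains a $\partial_k$ in a reduced subword (forcing $\partial_k^2=0$ after we reduce) or else $c[j]$ lies in a parabolic in which $s_{k+1}s_k$ commutes past everything and then annihilates because $\partial_k^2=0$ or by the braid/commutation relations in ${}^{\mathfrak b}\mathsf{NH}_n$; I would phrase this cleanly using that $\partial_{k+1}\partial_k\partial_{c[j]}$ fails to be a reduced word in the nilHecke algebra, hence is zero, precisely when $j\notin\{k,k+1\}$ — and when $j=k$ the extra condition to check is that $\partial_{k+1}\partial_k\partial_{c[k]} = \partial_{k+1}\partial_k\partial_{k+1}\partial_k\partial_{c[k+1]}$ vanishes by the length-four braid-type relation (this also handles the edge case feeding into $\partial_n(\mathsf{P})=0$, which follows since $c[j]$ for $j<n$ never ends in $s_n$ on the right in its reduced word while $c[n]=1$, so $\partial_n\partial_{c[j]}$ is either non-reduced or $\partial_n$ applied to a polynomial of too low $x_n$-degree). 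The upper-triangularity and invertibility of $\mathsf{P}$ I may simply cite from the paragraph defining admissibility.

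For part (2): given $\mathsf{Q}$ with $\partial_{k+1}\partial_k(\mathsf{Q}) = \gamma_k(\mathsf{Q})$, $\partial_n(\mathsf{Q})=0$, and $\deg q_{ij} = 2(j-i)$, I set $q_i := q_{in}$ and prove $q_{ij} = \partial_{c[j]}(q_i)$ by downward induction on $j$, the base case $j=n$ being the definition ($c[n]=1$). For the inductive step, the hypothesis $\partial_{j+1}\partial_j(\mathsf{Q}) = \gamma_j(\mathsf{Q})$ reads entrywise $\partial_{j+1}\partial_j(q_{i,j+1}) = q_{ij}$; using the inductive hypothesis $q_{i,j+1} = \partial_{c[j+1]}(q_i)$ and the recursion $\partial_{j+1}\partial_j\partial_{c[j+1]} = \partial_{c[j]}$ gives $q_{ij} = \partial_{c[j]}(q_i)$, as wanted. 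It then remains to check that the tuple $\mathbf q$ is admissible: the degree condition $\deg q_i = \deg q_{in} = 2(n-i)$ is given; the symmetry conditions ($q_i \in \QQ[\x]^{S_{n-1}}$ and $s_3(q_i)=q_i$ — by which I read $s_n$ for general $n$) follow because $q_i = q_{in}$ lies in $\ker\partial_j$ for $j\le n-1$ and $\ker\partial_n$, the former from $\gamma_j$ having no column $n$ in its image for $j\le n-2$ and the $j=n-1$ case from degree reasons, the latter from $\partial_n(\mathsf Q)=0$; and $\partial_{c[i]}(q_i) = q_{ii}$, which must lie in $\QQ\setminus\{0\}$ because $\deg q_{ii} = 0$ and $\mathsf Q$ is invertible with $q_{ij}=0$ for $j<i$ (upper triangular, forced by the degree condition $\deg q_{ij} = 2(j-i) < 0$), so its determinant is $\prod_i q_{ii}$, nonzero. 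I expect the main obstacle to be the vanishing analysis in part (1) for the off-diagonal blocks $j\notin\{k,k+1\}$: it requires a careful, case-by-case reduced-word argument inside ${}^{\mathfrak b}\mathsf{NH}_n$ (handling the type A commutation/braid relations and the extra length-four type B braid relation, plus the $\partial_n$-degree bookkeeping), and getting every case right — rather than any deep idea — is where the real work lies.
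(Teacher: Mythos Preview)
Your Part~2 is essentially correct: the downward induction on $j$ using $c[j]=s_{j+1}s_j\,c[j+1]$ recovers $q_{ij}=\partial_{c[j]}(q_{in})$, and the upper-triangularity forced by $\deg q_{ij}=2(j-i)$ together with invertibility gives $q_{ii}\in\QQ\setminus\{0\}$. This matches the paper's approach.

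The genuine gap is in Part~1. Your central claim --- that $\partial_{k+1}\partial_k\partial_{c[j]}$ fails to be reduced, hence vanishes \emph{as an operator}, precisely when $j\notin\{k,k+1\}$ --- is false. Take $j=n$: then $c[n]=1$ and $\partial_{k+1}\partial_k\partial_{c[n]}=\partial_{k+1}\partial_k$, which is a perfectly good nonzero operator for any $k\le n-2$. More generally, for $j\ge k+2$ the element $s_{k+1}s_k\,c[j]$ is reduced (since $c[j]$ only involves $s_m$ with $m\ge j\ge k+2$, so $s_k$ commutes past all of it and does not occur in it). Your fallback, ``commutes past everything and then annihilates because $\partial_k^2=0$,'' does not apply either: after commuting you obtain $\partial_{k+1}\partial_{c[j]}\partial_k$ acting on $p_i$, and there is no second $\partial_k$. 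What actually kills this is the part of the admissibility hypothesis you never invoke: $p_i\in\QQ[\mathbf x]^{S_{n-1}}$ and $s_n(p_i)=p_i$, i.e.\ $\partial_m(p_i)=0$ for every $m\ne n-1$. This is exactly how the paper proceeds (``the first part follows from the fact that $p_i$ is symmetric with respect to $S_{n-1}$ and $s_n$''). With symmetry in hand, the case $j\ge k+2$ is immediate (commute $\partial_k$ to the right and apply $\partial_k(p_i)=0$); the case $j=k=n-1$ uses the length-four braid $\partial_n\partial_{n-1}\partial_n\partial_{n-1}=\partial_{n-1}\partial_n\partial_{n-1}\partial_n$ followed by $\partial_n(p_i)=0$; and $\partial_n(\mathsf P)=0$ likewise rests on $\partial_n(p_i)=0$ rather than on any reducedness or degree count. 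The reduced-word strategy alone cannot close these cases.
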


\begin{proof} The first part follows from the fact that $p_i$ is symmetric with respect to $S_{n-1}$ and $s_n$, so that $$\partial_{k+1}\partial_k\partial_{c[j+1]} p_i = \delta_{j,k+1}\partial_{c[j]}p_i.$$ Indeed, this is obviously zero unless $j=k+1$, and, if $k<n-1$ \begin{align*}\partial_{k+1} \partial_k \partial_{c[k+1]}&= \partial_{k+1} \partial_k \partial_{k+2} \partial_{k+1} \ldots \partial_n \partial_{n-1}  \\ &= \partial_{c[k]};\end{align*} if $k=n-1$, then $\partial_n \partial_{n-1} \partial_{c[n]} = \partial_n \partial_{n-1} = \partial_{c[n-1]}.$ Clearly, $\partial_n(\PP)=0$.

The second part follows from similar calculations using the same proof as in type A, cf \cite[Lemma 4.3]{AEHL}\end{proof}

\noindent {\bf Example 2.} Let $n=3$ and $\p$ be as before. Then $$\partial_3\partial_2(\PP) = \left(\begin{array}{ccc}0&0& -(x_2^2+x_3^2) \\ 0&0&1\\0&0&0\end{array}\right).$$ This is clearly $\gamma_2(\PP)$. 

Also, $$\partial_2\partial_1(\PP) = \left(\begin{array}{ccc} 0&1&0 \\ 0&0&0 \\ 0&0 &0\end{array}\right) = \gamma_1(\PP).$$ \\


We have an additional characterization of matrices of admissible tuples which requires extra notation. Let $\Theta = \{\theta_i\}$ and $\Xi=\{\xi_i\}$ be two sets of algebraically independent elements in $\superpoly$ such that $\deg(\theta_i) = \deg(\xi_i) = 2(n-i)$, and define an invertible matrix $\mathsf{P} \in M_n(\QQ[x])$ by the relation $$\Xi = P \Theta.$$ We must therefore have $\deg(p_{ij}) = 2(j-i)$. 

\begin{lemma}\label{char2} Any two of these equations imply the third: 
\begin{enumerate} \item $\partial_{k+1}\partial_k(\PP) =  \gamma_k(\PP)$ for $1\leq k \leq n-1$, and $\partial_n(\PP)=0$.
\item $\partial_k(\Xi) = 0$ for $1\leq k \leq n.$
\item $\partial_k(\Theta) = -\rho_k(\Theta)$ for $1\leq k \leq n$.  
\end{enumerate}\end{lemma}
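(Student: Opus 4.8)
The plan is to deduce each of the three identities from the entrywise differentiation of the ``master equation'' $\Xi = \PP\,\Theta$, i.e.\ $\xi_i = \sum_j p_{ij}\theta_j$, by means of the super-Leibniz rule $\partial_k(fg) = \partial_k(f)g + s_k(f)\partial_k(g)$. It helps to record at the outset that the shuffle operators are matrix multiplications, $\gamma_k(A) = A E_{k,k+1}$ and $\rho_k(A) = E_{k,k+1}A$ with $E_{k,k+1}$ the matrix unit (so that $\gamma_k(\PP)\Theta = \PP\,\rho_k(\Theta)$), and that $E_{k,k+1}$ is fixed by every $s_k$ and annihilated by every $\partial_k$. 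Applying $\partial_k$ to the master equation then yields, for each $1\le k\le n$,
$$\partial_k(\Xi) = \partial_k(\PP)\,\Theta + s_k(\PP)\,\partial_k(\Theta),$$
with $\partial_n = \frac{\id - s_n}{2x_n}$ and $s_n(x_n)=-x_n$ in the case $k=n$. Since the $\theta_i$ (and the $\xi_i$) are algebraically---hence $\QQ(\x)$-linearly---independent, one may compare coefficients of the $\theta_j$ on the two sides; this is the only point at which the algebraic-independence hypothesis is used.

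The crux is to replace the second-order condition (1) by an equivalent first-order one. Under the degree constraints $\deg(p_{ij}) = 2(j-i)$---which already force $\PP$ to be upper triangular with constant, invertible diagonal---condition (1) is equivalent to a first-order relation expressing $\partial_k(p_{ij})$ through $p_{ik}$ (of the shape $\partial_k(p_{ij}) = \delta_{j,k+1}(x_k+x_{k+1})p_{ik}$ for $k<n$, together with $\partial_n(\PP)=0$): one implication is the identification of $\PP$ with the matrix of an admissible tuple carried out in Lemma \ref{char1}, and the other is a direct generating-function computation on those admissible polynomials, of exactly the type used in \eqref{homgenpolys}. With (1) so rephrased, all three conditions become first-order statements about $\partial_k$ applied to the entries of $\PP$, $\Xi$, and $\Theta$; feeding any two of them into the once-differentiated identity and comparing the coefficient of $\theta_{k+1}$ (which carries the single nontrivial entry of the matrix equation) against the coefficients of the remaining $\theta_j$ (which must vanish) produces the third. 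The statements at the last vertex---$\partial_n$ of $\PP$, of $\Xi$, of $\Theta$---come for free from the $k=n$ instance of the identity, where $\partial_n$ kills the relevant generators under whichever two hypotheses are assumed.

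I expect the main obstacle to be precisely this first-order reformulation of (1): checking that the admissible-tuple entries $p_{ij} = \partial_{c[j]}(p_{in})$ obey the first-order relation means unwinding the reduced word $c[j] = s_{j+1}s_j s_{j+2}s_{j+1}\cdots s_n s_{n-1}$ and invoking the type $B$ braid relations---in particular the quartic relation $\partial_{n-1}\partial_n\partial_{n-1}\partial_n = \partial_n\partial_{n-1}\partial_n\partial_{n-1}$ and the identity $\partial_n x_n + x_n\partial_n = 1$ at the last vertex---so that the vertices $n-1$ and $n$ require verification separately from the type $A$ pattern. Once the reformulation is in place, the three implications are a short, symmetric coefficient comparison, directly parallel to the type $A$ argument of \cite{AEHL}.
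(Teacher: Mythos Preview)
Your route differs from the paper's: the paper never reduces condition (1) to a first-order statement. Instead it applies the \emph{second-order} operator $\partial_{k+1}\partial_k$ to the master equation $\Xi=\PP\Theta$, expands via the iterated Leibniz rule into four terms, and then uses the invariance $s_{k+1}s_k(\gamma_k(\PP))=\gamma_k(\PP)$ (a consequence of (1)) to rewrite $\gamma_k(\PP)\Theta$ as $s_{k+1}s_k(\PP)\rho_{k+1}\rho_k(\Theta)$ and collapse the resulting identity. Working at second order is precisely what lets the paper match (1) as stated against the other two conditions without any intermediate reformulation.

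Your first-order reformulation of (1) is not merely unproved---it is false. The relation $\partial_k(p_{ij})=\delta_{j,k+1}(x_k+x_{k+1})p_{ik}$ is strictly stronger than (1): it does imply (1) upon applying $\partial_{k+1}$, but (1) does \emph{not} imply it. For a concrete counterexample take $n=3$ and the admissible entry $p_2=(x_1+x_2)^2$: it is $S_2$-symmetric, $s_3$-invariant, of degree $2$, and $\partial_{c[2]}(p_2)=\partial_3\partial_2((x_1+x_2)^2)=1\in\QQ\setminus\{0\}$, so by Lemma~\ref{char1} its matrix $\PP$ satisfies (1). Yet at $(i,j,k)=(2,3,2)$ one has
\[
\partial_2(p_{23})=\partial_2((x_1+x_2)^2)=2x_1+x_2+x_3,\qquad (x_2+x_3)\,p_{22}=(x_2+x_3)\cdot 1=x_2+x_3,
\]
which disagree. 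The generating-function computation you invoke from \eqref{homgenpolys} is specific to the particular admissible tuple built from complete homogeneous polynomials in $x_n^2$; admissibility allows many other tuples, and the first-order identity does not hold for them. Consequently your argument for the implications $(1)+(2)\Rightarrow(3)$ and $(1)+(3)\Rightarrow(2)$ breaks at exactly the step you flagged as the main obstacle, and cannot be repaired by the method you propose. The paper's second-order approach avoids this entirely.
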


\begin{proof}We follow \cite[Lemma 4.4]{AEHL}. Let $k\leq n-1$ and suppose $\partial_{k+1}\partial_k(\PP) = \gamma_k(\PP)$. Note that we have $$s_{k+1}s_k(\gamma_{k+1} \gamma_k(\PP)) = s_{k+1}s_k(\partial_{k+1}\partial_k(\PP)) = \partial_{k+1}\partial_k(\PP) = \gamma_k(\PP),$$ and it follows that $ \gamma_k(\PP)\Theta = s_{k+1}s_k(\PP) s_{k+1}s_k(\Theta)=s_{k+1} s_k (\PP)\rho_{k+1}\rho_k(\Theta) .$ Now, using the definition of $\PP$, we have that if $\partial_k(\Xi)=0$, $\partial_{k+1}\partial_k(\Xi)=0$, so \begin{align*}0&=\partial_{k+1}\partial_k(\PP)\Theta + s_{k+1}(\partial_k(\PP)) \partial_{k+1}(\Theta) \\&\quad + \partial_{k+1}s_k(\PP) \partial_k(\Theta)+ s_{k+1}s_k(\PP)\partial_{k+1}\partial_k(\Theta)\\&= \gamma_k(\PP)\Theta+s_{k+1}(\partial_k(\PP)) \partial_{k+1}(\Theta) \\&\quad + \partial_{k+1}s_k(\PP) \partial_{k}(\Theta)+ s_{k+1}s_k(\PP)\partial_{k+1}\partial_k(\Theta) \\ &= s_{k+1} s_k(\PP)\rho_{k+1}\rho_k(\Theta) +s_{k+1}(\partial_k(\PP)) \partial_{k+1}(\Theta) \\&\quad + \partial_{k+1}s_k(\PP) \partial_{k}(\Theta)+ s_{k+1}s_k(\PP)\partial_{k+1}\partial_k(\Theta).\end{align*}

 Thus, $$s_{k+1}s_k(\PP)(\rho_{k+1}\rho_k(\Theta) +\partial_{k+1}\partial_k(\Theta))= -s_{k+1} \partial_k(\PP) \partial_{k+1}(\Theta) - \partial_{k} (s_k(\PP)) \partial_{k+1}(\Theta).$$

 Acting on both sides by $s_{k} s_{k+1}$ and using the identity $\partial_k s_k = -s_{k+1}\partial_k$ gives $$\PP(\rho_k(\Theta) +\partial_{k+1}\partial_k(\Theta)) = -s_k\partial_k(\PP)\partial_{k+1}(\Theta) + s_k\partial_k(\PP)\partial_{k+1}(\Theta) = 0.$$ 

Hence $\partial_k(\Xi)=0$ if and only if $\partial_{k+1}\partial_k(\Theta) = -\rho_k(\Theta)$, since $\PP$ is invertible (and the proof applies for $k=n$ because $s_n(\PP) = \PP$). 

The other equivalence follows using the same elementwise techniques as in type A. \end{proof}

Finally, we may construct our isomorphism. Let $\f = \{f_1,\ldots, f_n\}$ be a set of algebraically independent generators of $\QQ[\x]^{W_{B_n}}$, with $\deg(f_i) = 2(n-i)$. Let $\p = \{p_1,\ldots, p_n\}\subset \QQ[\x]$ be an admissible tuple and $\PP$ its associated matrix. 
\begin{theorem} For any choice of $\f$ and $\p$, there is a unique $\QQ[\x]$-linear homomorphism $${}^{\mathfrak{b}}\mathbf{\mathsf{J}}_p^f : \extpoly \rightarrow \superpolydenoms$$ defined by the relation $\df = \PP \J(\bm{\omega}).$ Further, $\J$ is injective and ${}^{\mathfrak{b}}NH_n$-equivariant. \end{theorem}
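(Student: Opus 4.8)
The plan is to mimic the type A construction in \cite[\S4]{AEHL}, using Lemmas \ref{char1} and \ref{char2} as the technical backbone. First I would construct the map: since $\extpoly = \QQ[\x]\otimes\bigwedge[\omega_1,\ldots,\omega_n]$ is free as a $\QQ[\x]$-module on the monomials $\omega_\beta$, a $\QQ[\x]$-linear map out of it is determined by the images of the $\omega_i$. The relation $\df = \PP\,\J(\bm\omega)$ is an $n\times n$ system; since $\PP\in M_n(\QQ[\x])$ is upper triangular with $1$'s on the diagonal (by admissibility), it is invertible over $\QQ[\x]$ itself — but I actually want $\J(\bm\omega) = \PP^{-1}\df$, and here $\PP^{-1}$ has polynomial entries, so in fact $\J(\omega_i)\in\superpoly$; the denominators $\bm\alpha$ only enter when one checks equivariance, because the operators $\partial_i$ introduce them. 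So I would simply \emph{define} $\J(\omega_i) := (\PP^{-1}\df)_i$ and extend $\QQ[\x]$-linearly and multiplicatively on the exterior generators (noting the $\J(\omega_i)$ are odd, being $\QQ[\x]$-combinations of the $df_j$), which gives existence; uniqueness is immediate from the freeness and the fact that the $\omega_i$ generate. One should also record that $\J$ is graded: $\deg f_i = 2(n-i) = -\deg\omega_i$ wait — here one must be careful with the two grading conventions, but $\df$ is assigned the $\omega$-grading precisely so that $\J$ respects degrees.

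Next, injectivity. Since $\f$ is an algebraically independent generating set of $\QQ[\x]^{W_{B_n}}$, Solomon's theorem says $\QQ[\f,\df]$ is a free $\QQ[\f]$-module (equivalently $\superpoly^{W_{B_n}}$ is a free $\QQ[\x]^{W_{B_n}}$-module) on the squarefree monomials in the $df_i$; in particular $df_1,\ldots,df_n$ are ``exterior-algebraically independent'' over $\QQ[\x]$, i.e. the $\QQ[\x]$-subalgebra they generate inside $\superpoly$ is a free exterior algebra of rank $2^n$. Because $\PP$ is invertible over $\QQ[\x]$, the $\J(\omega_i)=(\PP^{-1}\df)_i$ are again a $\QQ[\x]$-basis of $\operatorname{span}_{\QQ[\x]}\{df_i\}$, hence also exterior-algebraically independent, so $\J$ sends the $\QQ[\x]$-basis $\{\omega_\beta\}$ of $\extpoly$ to a $\QQ[\x]$-linearly independent set. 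That gives injectivity.

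Finally, ${}^{\mathfrak{b}}\mathsf{NH}_n$-equivariance. Since $\J$ is $\QQ[\x]$-linear and $\QQ[\x]$ acts by multiplication on both sides, it suffices to check $\J$ intertwines each $\partial_i$, and since $\partial_i$ is a twisted derivation (Leibniz with the $s_i$-twist) and $\J$ is multiplicative on $\omega$'s, it is enough to verify $\J(\partial_i\omega_j) = \partial_i(\J(\omega_j))$ for all $i,j$ — together with the compatibility of $\J$ with the $s_i$-action needed to push the twist through, which follows because $s_i$ fixes the $f_k$ (so $s_i$ commutes with $d$) and acts on $\PP$ as in Lemma \ref{char1}. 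Now the left side is computed from $\partial_i(\omega_j) = -\delta_{ij}(x_i+x_{i+1})\omega_{i+1}$ (and $0$ for $i=n$), while the right side is $\partial_i((\PP^{-1}\df)_j)$. Writing $\Theta := \J(\bm\omega)$ so that $\df = \PP\Theta$ and $\partial_i(\df)=0$ (as $\df$ is $W_{B_n}$-invariant — this is the content of Solomon applied to the $\partial_i$-stability of $\QQ[\x,\df]$), Lemma \ref{char2} (with $\Xi=\df$, using that parts (1) and (2) hold) yields exactly $\partial_k(\Theta) = -\rho_k(\Theta)$, i.e. $\partial_k(\J(\omega_j)) = -\delta_{jk}\J(\omega_{k+1})\cdot(\text{something})$ — and here one must match the normalization: Lemma \ref{char2}'s $\rho_k$ gives $\partial_k(\Theta_j) = -\delta_{jk}\Theta_{k+1}$, whereas we need the factor $(x_i+x_{i+1})$; this discrepancy is absorbed by recalling that the entries of $\PP$ are symmetric in $x_i^2$ rather than $x_i$, so the precise type B statement of Lemma \ref{char2} that I should have set up carries the extra $(x_k+x_{k+1})$ in part (3), exactly matching $\partial_i(\omega_j)$. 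So equivariance for $1\le i\le n-1$ is Lemma \ref{char2}, and for $i=n$ it is the assertion $\partial_n(\df)=0 \iff \partial_n(\Theta)=0$, which holds since $\partial_n(\PP)=0$ and $\PP$ is invertible. The main obstacle, and the place to be most careful, is precisely this last bookkeeping: ensuring the type B normalizations of $\gamma_k,\rho_k$ and of Lemma \ref{char2}(3) are stated so that the factor $(x_k+x_{k+1})$ (and the $s_n$-twist at the last node) comes out correctly to match the Demazure action on the $\omega_i$ defined in Section 2.1 — everything else is a formal transcription of \cite[\S4]{AEHL}.
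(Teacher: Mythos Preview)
Your proposal is correct and essentially identical to the paper's own proof: both construct $\J$ via $\J(\bm\omega)=\PP^{-1}\df$, deduce injectivity from the invertibility of $\PP$ together with the algebraic independence of $\df$, and derive equivariance by feeding Lemma~\ref{char1}(1) and $\partial_k(\df)=0$ into Lemma~\ref{char2} to obtain $\partial_k(\J(\bm\omega))=-\rho_k(\J(\bm\omega))$. The normalization bookkeeping you flag (the extra factor $(x_k+x_{k+1})$ needed to match the Demazure action on $\omega_i$) is exactly the step the paper's proof glosses over, asserting only that the resulting action ``matches the $W_{B_n}$ action on $\bm{\omega}$''; so you have located, not introduced, the delicate point.
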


\begin{proof} We follow \cite[Proposition 4.5]{AEHL}. Since $\p$ is admissible, the matrix $\PP$ is invertible, and thus $\J$ is uniquely determined by this condition and linearity in $\QQ[\x]$. Injectivity of $\J$ follows from the invertibility of $\PP$ and the algebraic independence of the sets $\f$ and $\df$.

It remains to show that $\J$ respects the action of the divided difference operators. Since $\p$ is admissible, we have by Lemma \ref{char1} that $\partial_{k+1}\partial_k(\PP) =  \gamma_k(\PP)$. Note also that $\partial_k(\df)=0$, so by Lemma \ref{char2}, we must have $\partial_k(\J(\bm{\omega})) = -\rho_k(\J(\bm{\omega}).$ Then \begin{align*} \partial_k(\J(\omega_j)) &= \delta_{ij}\J(\omega_{j+1}). \end{align*} It follows that $$s_k(\J(\omega_j)) = \J(\omega_j) +a \delta_{jk}(x_{k} - x_{k+1}) \J(\omega_{j+1}) $$ for all $j,k<n$. Note that this matches the $W_{B_n}$ action on $\bm{\omega}$. Finally, if $k=n$, we have $$\partial_n(\J(\omega_j)) = \J(\omega_j),$$ since $\rho_k(\J(\omega_j)) = 0$. Hence $\J$ is $\nhb$-equivariant. 
\end{proof}

Hence $\J$ descends to a canonical identification of $W_{B_n}$-invariants $$\extpoly^{W_{B_n}} \cong \superpoly^{W_{B_n}}.$$ 

\nocite{*}

\end{document}